\newcommand*{\Z}{\mathbb Z}
\newcommand*{\R}{\mathbb R}
\newtheorem{lemma}{Lemma}[section]
\newtheorem{cor}{Corollary}[section]
\newtheorem{theorem}{Theorem}[section]
\newtheorem{remark}{Remark}[section]
\begin{document}

\title[Nonlocal problems in perforated domains]
{Nonlocal problems in perforated domains}

\author[M. C. Pereira and
J. D. Rossi]{Marcone C. Pereira and
Julio D. Rossi}

\address{Julio D. Rossi
\hfill\break\indent Dpto. de Matem{\'a}ticas, FCEyN,
\hfill\break\indent
Universidad de Buenos Aires, \hfill\break\indent Ciudad Universitaria Pab 1, 1428, Buenos Aires,
Argentina. } \email{{\tt jrossi@dm.uba.ar} \hfill\break\indent {\it
Web page: }{\tt http://mate.dm.uba.ar/$\sim$jrossi/}}

\address{Marcone C. Pereira
\hfill\break\indent Dpto. de Matem{\'a}tica Aplicada, IME,
\hfill\break\indent
Universidade de S\~ao Paulo, \hfill\break\indent Rua do Mat\~ao 1010, 
S\~ao Paulo - SP, Brazil. } \email{{\tt marcone@ime.usp.br} \hfill\break\indent {\it
Web page: }{\tt www.ime.usp.br/$\sim$marcone}}

\keywords{perforated domains, nonlocal equations, Neumann problem, Dirichlet problem.\\
\indent 2010 {\it Mathematics Subject Classification.} 45A05, 45C05, 45M05.}

\begin{abstract} 
In this paper we analyze nonlocal equations in perforated domains. We consider nonlocal problems
of the form $f(x) = \int_{B} J(x-y) (u(y) - u(x)) dy$ with $x$ in a perforated domain $\Omega^\epsilon \subset \Omega$. Here
$J$ is a non-singular kernel. 
We think about $\Omega^\epsilon$ as a fixed set $\Omega$ from where we have
removed a subset that we call the holes.
We deal both with the Neumann and Dirichlet conditions in the holes and assume a Dirichlet condition outside $\Omega$. In the later case we impose that
$u$ vanishes in the holes but integrate in the whole $\R^N$ ($B=\R^N$) and in the former we just consider integrals in $\R^N$ minus the holes ($B=\R^N \setminus (\Omega \setminus \Omega^\epsilon)$).
Assuming weak convergence of the holes, specifically, under the assumption that the characteristic function of $\Omega^\epsilon$ has a weak limit, $\chi_{\epsilon} \rightharpoonup \mathcal{X}$ weakly$^*$ in $L^\infty(\Omega)$,
we analyze the limit as $\epsilon \to 0$ of the solutions to the nonlocal problems proving that there is a
nonlocal limit problem. In the case in which the holes are periodically removed balls we obtain that
the critical radius is of order of the size of the typical cell (that gives the period). 
In addition, in this periodic case, we also study the behavior of these nonlocal problems when we rescale the kernel 
in order to approximate local PDE problems. 
\end{abstract}

\maketitle

\section{Introduction}
\label{Sect.intro}
\setcounter{equation}{0}

Let $\Omega^\epsilon \subset \R^N$ be a family of open bounded sets satisfying 
$$\Omega^\epsilon \subset \Omega$$
for some fixed open bounded domain $\Omega \subset \R^N$ and $\epsilon>0$. 
If $\chi_\epsilon \in L^\infty(\R^N)$ is the characteristic function of $\Omega^\epsilon$, 
we also assume that there exists $\mathcal{X} \in L^\infty(\R^N)$ such that 
\begin{equation} \label{CharF}
\chi_\epsilon \rightharpoonup \mathcal{X}  \quad  \textrm{ weakly$^*$ in } L^\infty(\Omega).
\end{equation}
This means,
$$
\int_{\Omega} \chi_\epsilon(x) \, \varphi(x) \, dx \to \int_{\Omega} \mathcal{X}(x) \, \varphi(x) \, dx \quad \textrm{ as } \epsilon \to 0
$$
for all $\varphi \in L^1(\Omega)$.
Note that both functions $\chi_\epsilon$ and $\mathcal{X}$ satisfy  
$$
\begin{gathered}
0 \leq \mathcal{Y}(x) \leq 1, \textrm{ for all } x \in \R^N \\
 \mathcal{Y}(x) \equiv 0 \quad \textrm{ as } x \in \R^N \setminus \Omega,
 \end{gathered}
 $$
 for $\mathcal{Y} = \chi_\epsilon$ or $\mathcal{X}$.

Our main goal in this paper is to study nonlocal problems with non-singular kernels in the \emph{perforated domains} $\Omega^\epsilon$. 
We consider problems of the form
 $$f (x) = \int_{B} J(x-y) (u^\epsilon (y) - u^\epsilon (x)) dy$$ 
 with $ x \in \Omega^\epsilon \subset \Omega$. Here
$J$ is a non-singular kernel. 
We deal both with the Neumann and Dirichlet problems. 
For the Dirichlet case we impose that
$u$ vanishes in $\R^N \setminus \Omega^\epsilon$ and we integrate in the whole $\R^N$ ($B=\R^N$) 
while in the Neumann case we just consider integrals in $\R^N$ minus $\Omega \setminus \Omega^\epsilon$ ($B=\R^N \setminus (\Omega \setminus \Omega^\epsilon)$) only assuming that $u$ vanishes in $\R^N\setminus \Omega$.
Note that for this last case we have considered nonlocal Neumann boundary conditions
in the holes and a Dirichlet boundary condition in the exterior of the set $\Omega$.

Along the whole paper, we assume that the function $J$ that appears as the kernel in the nonlocal problem satisfies the following hypotheses:
$$
{\bf (H_J)} \qquad 
\begin{gathered}
J \in \mathcal{C}(\R^N,\R) \textrm{ is non-negative and compactly supported with } 
J(0)>0, \\ \; J(-x) = J(x) \textrm{ for every $x \in \R^N$, and } 
\int_{\R^N} J(x) \, dx = 1.
\end{gathered}
$$
On the other hand we only assume that $f\in L^2 (\Omega)$.

Our main result, that holds both for the Dirichlet and the Neumann problem, says that 
there exists a limit as $\epsilon \to 0$,
$$
\tilde u^\epsilon \rightharpoonup u^*, \quad \textrm{ weakly in } L^2 (\Omega),
$$
where $\, \tilde \cdot \,$ denotes the extension by zero of functions defined in subsets of $\R^N$. 
We characterize the nonlocal problem that verifies the weak limit $u^*$. 
Note that in the Dirichlet problem, the extension by zero and the solutions coincide,
and then, to consider $\, \tilde u \,$ can be omitted.

For the Dirichlet problem we have the following result:
\begin{theorem} \label{theoD}
Let $\{ u^\epsilon \}_{\epsilon>0}$ be the family of solutions of the nonlocal Dirichlet problem
\begin{equation} \label{1.1Dintro}
f(x) = \int_{\R^N } J (x-y) (u^\epsilon (y) - u^\epsilon (x)) dy, \qquad x  \in \Omega^\epsilon
\end{equation}
with 
\begin{equation} \label{BCDintro}
u^\epsilon (x) \equiv 0, \qquad x \in \R^N \setminus \Omega^\epsilon,
\end{equation}
for $f \in L^2(\Omega)$, and assume that the characteristic functions $\chi_\epsilon$ satisfy \eqref{CharF}.
Then, there exists $u^* \in L^2(\Omega)$ such that 
$$\tilde u^\epsilon \rightharpoonup u^* \textrm{ weakly in } L^2(\Omega).$$
Moreover, the limit $u^*$ satisfies the following nonlocal problem in $\Omega$,
$$
\mathcal{X}(x) \, f(x) = \mathcal{X}(x) \int_{\R^N}  J (x-y) (u^*(y) - u^*(x) ) \, dy - (1-\mathcal{X}(x)) \, u^*(x) 
$$
with 
$$
u^*(x) \equiv 0, \qquad x \in \R^N \setminus \Omega.
$$
\end{theorem}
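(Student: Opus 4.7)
\emph{Strategy.} My plan is to recast the Dirichlet problem as a single pointwise identity on $\R^N$ weighted by $\chi_\epsilon$, obtain a uniform $L^2$ bound on the extensions by zero $\tilde u^\epsilon$, and pass to the limit by exploiting that convolution against $J$ turns weak into strong convergence. Uniqueness of the limit problem then upgrades subsequential to full convergence.

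\emph{Step 1 (reformulation).} Because $\int_{\R^N} J = 1$, equation \eqref{1.1Dintro} is equivalent to $f = J*\tilde u^\epsilon - \tilde u^\epsilon$ on $\Omega^\epsilon$. Multiplying by $\chi_\epsilon$ and using $\chi_\epsilon \tilde u^\epsilon = \tilde u^\epsilon$, the whole problem \eqref{1.1Dintro}--\eqref{BCDintro} is equivalent to the single identity
$$\chi_\epsilon(x) f(x) \;=\; \chi_\epsilon(x)(J*\tilde u^\epsilon)(x) - \tilde u^\epsilon(x), \qquad x\in\R^N,$$
whose two sides live on the fixed set $\Omega$ and which is stable under the weak$^*$ convergence of $\chi_\epsilon$.

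\emph{Step 2 (uniform bound).} Testing \eqref{1.1Dintro} with $u^\epsilon$, integrating over $\Omega^\epsilon$ and symmetrising yields the standard energy identity
$$-\int_{\Omega^\epsilon} f\,u^\epsilon\,dx \;=\; \frac{1}{2}\iint_{\R^N\times\R^N} J(x-y)\,(\tilde u^\epsilon(x)-\tilde u^\epsilon(y))^2\,dy\,dx.$$
A nonlocal Poincaré-type inequality for functions supported in the fixed bounded set $\Omega$ (which holds in this non-singular regime because $J\in C_c(\R^N)$ is symmetric with $J(0)>0$) controls $\|\tilde u^\epsilon\|_{L^2}^2$ by the right-hand side; combined with Cauchy--Schwarz on the left this gives $\|\tilde u^\epsilon\|_{L^2(\R^N)}\leq C(\Omega,J)\,\|f\|_{L^2(\Omega)}$ uniformly in $\epsilon$.

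\emph{Step 3 (passage to the limit).} Extract a subsequence along which $\tilde u^\epsilon \rightharpoonup u^*$ weakly in $L^2(\Omega)$; $u^*$ vanishes outside $\Omega$ because each $\tilde u^\epsilon$ does. The heart of the argument is that convolution with $J$ sends weak into strong convergence: $(J*\tilde u^\epsilon)(x)=\langle J(x-\cdot),\tilde u^\epsilon\rangle\to(J*u^*)(x)$ pointwise by weak convergence, while the uniform $L^2$ bound together with the uniform continuity of $J$ yields uniform equicontinuity of the family $\{J*\tilde u^\epsilon\}$ on $\overline\Omega$; Arzelà--Ascoli then upgrades pointwise to uniform, hence $L^2(\Omega)$-strong, convergence. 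Consequently both $\chi_\epsilon(J*\tilde u^\epsilon)\rightharpoonup \mathcal{X}(J*u^*)$ and $\chi_\epsilon f\rightharpoonup \mathcal{X} f$ weakly in $L^2(\Omega)$, each as a product of a weakly$^*$ convergent and a strongly convergent sequence. Passing to the limit in the Step~1 identity produces
$$\mathcal{X}(x) f(x) \;=\; \mathcal{X}(x)(J*u^*)(x) - u^*(x),$$
and splitting $-u^*(x) = -\mathcal{X}(x)u^*(x)-(1-\mathcal{X}(x))u^*(x)$ rewrites this precisely as the limit equation stated in the theorem.

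\emph{Step 4 (uniqueness and main obstacle).} To promote subsequential convergence to the full family, the limit equation must be uniquely solvable. For the difference $w$ of two $L^2(\Omega)$ solutions one has $w = \mathcal{X}(J*w)$; testing with $w$ and applying Cauchy--Schwarz,
$$\|w\|_{L^2}^2 \;=\; \int(\mathcal{X} w)(J*w)\,dx \;\leq\; \|w\|_{L^2}\,\|J*w\|_{L^2} \;\leq\; \|w\|_{L^2}^2,$$
so $\|J*w\|_{L^2}=\|w\|_{L^2}$. By Plancherel, $\hat w$ is supported on $\{|\hat J|=1\}$; since $J\in C_c(\R^N)$ symmetric with $J(0)>0$ forces $|\hat J(\xi)|<1$ for all $\xi\neq 0$, one concludes $\hat w\equiv 0$ and thus $w\equiv 0$. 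The main technical obstacle lies in Step~3: it is the strong convergence of the convolutions $J*\tilde u^\epsilon$, and the resulting ability to multiply by the merely weakly$^*$ convergent factor $\chi_\epsilon$, that make the limit equation identifiable as stated.
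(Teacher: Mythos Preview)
Your proof is correct and, in Steps~1--3, tracks the paper's argument closely: the paper rewrites the equation in the form of your Step~1 identity, obtains the uniform $L^2$ bound from the first Dirichlet eigenvalue, and proves strong $L^2$ convergence of $J*\tilde u^\epsilon$ in order to pass to the limit against the weakly$^*$ convergent factor $\chi_\epsilon$. Two minor differences: (i) for the uniform bound the paper invokes the eigenvalue $\beta_1^\epsilon$ on $\Omega^\epsilon$ and then needs a separate lemma to show $\beta_1^\epsilon\geq c>0$, whereas your observation that $\tilde u^\epsilon$ already vanishes outside the \emph{fixed} set $\Omega$ lets you use the Poincar\'e constant of $\Omega$ directly and bypass that lemma; (ii) for the strong convergence of the convolutions the paper uses dominated convergence (pointwise limit plus uniform pointwise bound, first on $U_\epsilon$ and then on $U_\epsilon^2$), while you use equicontinuity and Arzel\`a--Ascoli; both routes are equally short here.

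The genuine divergence is in Step~4. The paper's uniqueness argument is energy-based: it splits $\Omega$ into $D=\{\mathcal X=0\}$ (where the equation forces $u^*=0$) and $\Omega\setminus D$, rewrites the equation there as a Dirichlet problem with the nonnegative zero-order coefficient $(1-\mathcal X)/\mathcal X$, tests with the difference $w^*$, and concludes via the strict positivity of the Dirichlet form (Proposition~2.2 in \cite{ElLibro}). Your argument is Fourier-analytic: from $w=\mathcal X(J*w)$ and $0\le\mathcal X\le 1$ you squeeze $\|J*w\|_{L^2}=\|w\|_{L^2}$, and then Plancherel together with $|\hat J(\xi)|<1$ for $\xi\neq 0$ (which indeed follows from $J\ge 0$, $\int J=1$, and $J>0$ on a neighborhood of the origin) forces $\hat w$ to be supported at the origin, hence $w=0$. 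Your route is slicker and avoids the case distinction on $\{\mathcal X=0\}$; the paper's route has the advantage of staying entirely on the physical side and generalising more readily to the Neumann setting treated later, where the extra term $\Lambda$ is not a constant multiple of $u^*$ and a Fourier argument would be less natural.
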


We want to remark that no regularity assumptions on the sets $\Omega^\epsilon$ (besides measurability and the weak convergence \eqref{CharF}) is needed
for our arguments.

For local operators, like the usual Laplacian, i.e., for the problem $\Delta v^\epsilon = f$ in $\Omega^\epsilon$
with $v^\epsilon =0$ on $\partial \Omega^\epsilon$, the study of the behavior of 
solutions in perforated domains has attracted much interest since the pioneering works \cite{CH, Necas, RT}. 
In the classical paper \cite{CM}, for example, the authors
consider the Dirichlet problem for the equation $\Delta v^\epsilon = f$ in a bounded domain 
from where we have removed a big number of periodic small balls (the holes). 
That is, they consider 
$$\Omega^\epsilon = \Omega \setminus \cup_i B_{r^\epsilon} (x_i)$$
where $B_{r^\epsilon} (x_i)$ is a ball centered 
in $x_i\in \Omega$ of the form $x_i \in 2 \epsilon \Z^N$ with radius $0 < r^\epsilon < \epsilon \leq 1$. 
See Figure \ref{fig1} bellow for an example of a periodic perforated domain $\Omega^\epsilon$.

\begin{figure}[htp] 
\centering \scalebox{0.4}{\includegraphics{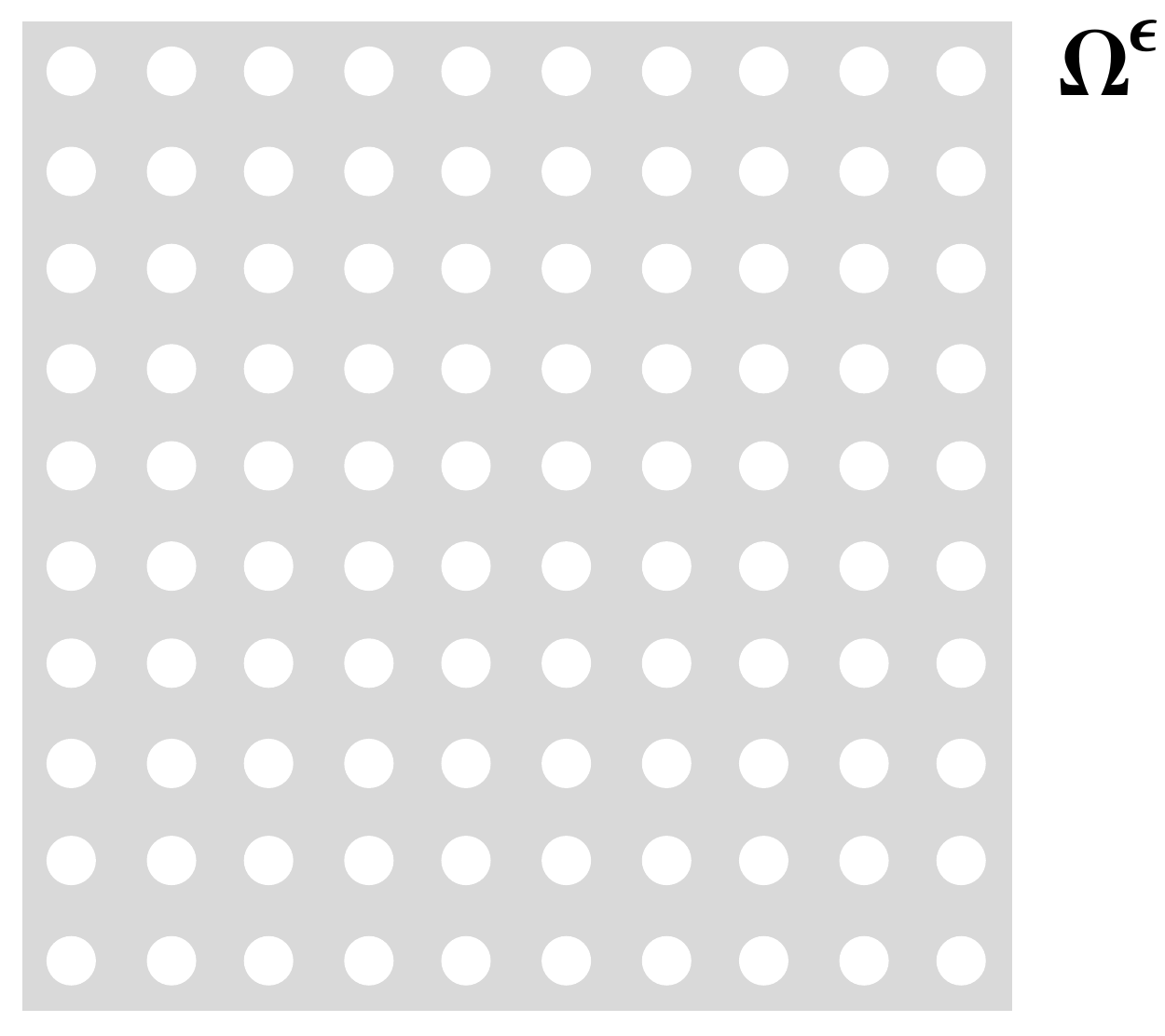}}
\caption{A periodic perforated domain $\Omega^\epsilon= (0,1)^2  \setminus \cup B_{r^\epsilon} (x_i)$.}
\label{fig1} 
\end{figure}

In \cite{CM} it is shown that there is a critical size of the holes (that is, a critical order of $r^\epsilon$ in $\epsilon$) such that
$$
v^\epsilon \to v^*, \qquad \mbox{ as } \epsilon \to 0,
$$
with $v^*$ given by
\begin{equation} \label{v^*.intro}
v^* = \left\{
\begin{array}{ll}
\mbox{the solution to } \Delta v^* = f , \qquad & \mbox{if } r^\epsilon \ll a^\epsilon, \\[4pt] 
\mbox{the solution to } \Delta v^* - \mu \, v^* = f , \qquad & \mbox{if } r^\epsilon = a^\epsilon, \\[4pt]
v^* = 0 , \qquad & \mbox{if } r^\epsilon \gg a^\epsilon, 
\end{array}
\right.
\end{equation}
with Dirichlet boundary conditions $v^*=0$ on $\partial \Omega$.
Assuming $N \geq 3$, we have  that the critical size of the holes is given by
$$
a^\epsilon \sim \epsilon^{\frac{N}{N-2}}.
$$

Note the extra term $\mu \, v^*$ that appears in the critical case. 
In this particular example, for $a^\epsilon = C_0 \epsilon^{\frac{N}{N-2}}$
with $C_0$ a constant, 
 $\mu$ is a positive constant that can be explicitly computed and is given by 
$$
\mu = \frac{S_N (N-2)}{2^N}C_0^{N-2}
$$
where $S_N$ is the surface of the sphere of radius one in $\R^N$.
Also, it is proved that the convergence $v^\epsilon \to v^*$ is weak in $H^1(\Omega)$ in the first two cases and 
strong when $r^\epsilon \gg a^\epsilon$.

For our nonlocal problem, in the same periodic setting, we get that the critical value
of $r^\epsilon$ is different from the local case and is given by
$$b^\epsilon = C_0 \, \epsilon$$ 
since in this case we obtain from Theorem \ref{theoD} that the limit $u^*$ verifies
$$
u^* = \left\{
\begin{array}{ll}
\displaystyle
\mbox{the solution to }  \int_{\R^N} J(x-y) (u^*(y) - u^*(x) )\,dy = f (x) , \qquad & \mbox{if } r^\epsilon \ll b^\epsilon, \\[9pt] 
\displaystyle 
\mbox{the solution to }  \int_{\R^N} J(x-y) (u^*(y) - u^*(x) )\,dy - \nu u^*(x) = f (x), \qquad & \mbox{if } r^\epsilon = b^\epsilon, \\[4pt]
u^* = 0 , \qquad & \mbox{if } r^\epsilon \gg b^\epsilon. \\[6pt]
\end{array}
\right.
$$
We have weak convergence in $L^2 (\Omega)$ in the first two cases and strong convergence in the last one.
Here the coefficient $\nu$ that appears in the critical case is also positive and can be explicitly computed. In fact,
$$\nu =  \frac{1 - \mathcal{X}}{\mathcal{X}}$$
where $\mathcal{X} \in L^\infty(\Omega)$ is just a positive constant, $\mathcal{X}=cte$,
determined by the proportion of the cube which is occupied by the hole. This follows from the fact that
in this periodic case we have
$\chi_\epsilon \rightharpoonup \mathcal{X} = |Q\setminus B| /|Q|$ (here $Q$ is the unit cube and $B$ is a ball 
of radius $C_0$ inside the cube). 
In some sense, the terms $\mathcal{X}$ and $(1 - \mathcal{X})$ in the limit problem can be seen as the effect of the holes in the original equation \eqref{1.1Dintro} and \eqref{BCDintro}.
The coefficient $\nu$ that appears in the critical case represents a kind of friction or drag caused by the perforations.

Concerning to the Neumann problem that we write as follows (see \cite{ElLibro,CERW}):
\begin{equation} \label{1.1intro}
f(x) = \int_{\R^N \setminus A^\epsilon} J (x-y) (u^\epsilon (y) - u^\epsilon (x)) dy, \qquad x  \in \Omega^\epsilon
\end{equation}
with 
\begin{equation} \label{BCNintro}
u^\epsilon (x) \equiv 0, \qquad x \in \R^N \setminus \Omega,
\end{equation}
and $f \in L^2(\Omega)$, where $A^\epsilon$ is the family of holes given by 
$$
A^\epsilon = \Omega \setminus \Omega^\epsilon.
$$
Note that we are integrating only in $\R^N \setminus A^\epsilon$ in the definition of our nonlocal operator, but still assume that 
$u^\epsilon \equiv 0$ in $\R^N \setminus \Omega$. Hence,
we are considering Neumann boundary conditions in the holes and a Dirichlet noundary condition
outside $\Omega$.
For this problem, we need to guarantee that the quantity 
$$
\lambda_1^\epsilon = \inf_{u\in W_\epsilon}  \frac{ \displaystyle
\frac12 \int_{\R^N \setminus A^\epsilon} 
\int_{\R^N\setminus A^\epsilon} J (x-y) (u (y) - u (x))^2 dy\, dx}{
\displaystyle \int_{\Omega^\epsilon} u^2(x) \, dx}
$$
with 
$$
W_\epsilon = \left\{ u\in
L^2({\R^N \setminus A^\epsilon}) \ : \ u (x) \equiv 0, \; x \in \R^N \setminus \Omega \right\}
$$
possesses a uniform lower bound. This is needed in order to obtain existence and uniqueness of solutions $u^\epsilon$ and is also necessary to study the asymptotic behavior of the problem as $\epsilon \to 0$.

We have the following result:

\begin{theorem} \label{theo1}
Let $\{ u^\epsilon \}_{\epsilon>0}$ be a family of solutions of problem \eqref{1.1intro} with \eqref{BCNintro}. 
Assume that the characteristic functions $\chi_\epsilon$ satisfy \eqref{CharF} and that
$\lambda_1^\epsilon \geq c>0$ with $c$ independent of $\epsilon$.
Then, there exists $u^* \in L^2(\Omega)$ such that 
$$\tilde u^\epsilon \rightharpoonup u^* \textrm{ weakly in } L^2(\Omega).$$

If $\mathcal{X} = 0$ in $L^\infty(\R^N)$ and 
$$ \int_{\R^N \setminus \Omega} J(x-y) \, dy
\geq m >0, \qquad \mbox{ for }x\in \Omega,$$ 
we have $u^*(x) = 0$ a.e. in $\R^N$.

If $\mathcal{X} \neq 0$ in $L^\infty(\R^N)$, the function $u^*$ satisfies the following nonlocal problem in $\Omega$,
$$
\mathcal{X}(x) \, f(x) = \mathcal{X}(x) \int_{\R^N}  J (x-y) (u^*(y) - u^*(x) ) \, dy -\Lambda(x) \, u^*(x) 
$$
with 
$$
u^*(x) \equiv 0, \qquad x \in \R^N \setminus \Omega,
$$
where $\Lambda \in L^\infty(\Omega)$ is given by 
$$
\Lambda(x) =  \int_{\R^N} J(x-y) \, ( 1- \chi_\Omega(y) + \mathcal{X}(y) ) \, dy - \mathcal{X}(x), \quad x \in \Omega.
$$
Here $\chi_\Omega$ is the characteristic function of the open set $\Omega$ and $\mathcal{X}$ is given by \eqref{CharF}.
\end{theorem}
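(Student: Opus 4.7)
My plan is to follow the blueprint of Theorem~\ref{theoD}, pivoting on three ingredients: a uniform $L^2$ bound on $u^\epsilon$ coming from $\lambda_1^\epsilon \geq c>0$, a reformulation of the nonlocal equation as a pointwise identity on all of $\R^N$, and the compactness of convolution against the continuous, compactly supported kernel $J$.

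For the a priori bound I would test the equation against $u^\epsilon$ and integrate over $\Omega^\epsilon$. Since $u^\epsilon \equiv 0$ on $\R^N\setminus\Omega$ and $\Omega^\epsilon=(\R^N\setminus A^\epsilon)\cap\Omega$, the outer integral extends for free to $\R^N\setminus A^\epsilon$, and symmetrising via $J(x-y)=J(y-x)$ gives
$$
-\int_{\Omega^\epsilon} f\, u^\epsilon\, dx \;=\; \tfrac12 \int_{\R^N\setminus A^\epsilon}\int_{\R^N\setminus A^\epsilon} J(x-y)(u^\epsilon(y)-u^\epsilon(x))^2\, dy\, dx.
$$
Because $u^\epsilon \in W_\epsilon$, the assumption $\lambda_1^\epsilon\geq c>0$ together with Cauchy--Schwarz yields $\|u^\epsilon\|_{L^2(\Omega^\epsilon)} \leq \|f\|_{L^2(\Omega)}/c$, so $\{\tilde u^\epsilon\}$ is bounded in $L^2(\Omega)$ and admits a weakly convergent subsequence with limit $u^*$.

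The next step is to rewrite the equation globally. Using $\chi_{\R^N\setminus A^\epsilon} = 1 - \chi_\Omega + \chi_\epsilon$ and the fact that $\tilde u^\epsilon \equiv 0$ on $A^\epsilon\cup(\R^N\setminus\Omega)$, the original equation is equivalent to the identity, valid for a.e.\ $x \in \R^N$,
$$
\chi_\epsilon(x)\, f(x) = \chi_\epsilon(x)\!\int_{\R^N}\! J(x-y)\, \tilde u^\epsilon(y)\, dy - \tilde u^\epsilon(x)\!\int_{\R^N}\! J(x-y)(1-\chi_\Omega(y)+\chi_\epsilon(y))\, dy.
$$
Since $J$ is continuous with compact support, both families $\{J*\tilde u^\epsilon\}$ and $\{J*\chi_\epsilon\}$ are uniformly bounded and equicontinuous on $\overline\Omega$; Arzela--Ascoli together with the identification of the limit via the weak and weak$^*$ convergences gives uniform convergence on $\overline\Omega$ to $J*u^*$ and $J*\mathcal{X}$, respectively. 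Multiplying a uniformly convergent sequence by a weakly convergent one passes to the weak $L^2$ limit in each of the three products, producing
$$
\mathcal{X}(x)\, f(x) = \mathcal{X}(x)\!\int_{\R^N}\! J(x-y) u^*(y)\, dy - u^*(x)\!\int_{\R^N}\! J(x-y)(1-\chi_\Omega(y)+\mathcal{X}(y))\, dy.
$$
Using $\int_{\R^N}J=1$ to rewrite the first convolution as $\int J(x-y)(u^*(y)-u^*(x))\, dy + u^*(x)$, and isolating the $\mathcal{X}(x)u^*(x)$ contribution from the second integral, reproduces exactly the statement with the coefficient $\Lambda(x)$ as given. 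When $\mathcal{X}\equiv 0$, both the left-hand side and the first term on the right vanish, so the identity reduces to $u^*(x)\int_{\R^N\setminus\Omega} J(x-y)\, dy = 0$, and the standing lower bound $\geq m>0$ on that integral forces $u^*\equiv 0$.

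The main technical obstacle is passing to the limit in the bilinear products $\chi_\epsilon(J*\tilde u^\epsilon)$ and $\tilde u^\epsilon(J*\chi_\epsilon)$: weak limits do not commute with products in general, but the continuity and compact support of $J$ in $(H_J)$ make convolution by $J$ a compact operation, converting the two weak$^*$/weak convergences into uniform ones on $\overline\Omega$ and allowing each product to pass to its expected limit. A secondary point is that this argument only yields convergence along subsequences; convergence of the full family $\tilde u^\epsilon$ requires a uniqueness assertion for the limit nonlocal problem.
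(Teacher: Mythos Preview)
Your proof is correct and follows essentially the same route as the paper: a uniform $L^2$ bound from $\lambda_1^\epsilon\ge c$, the same global rewriting of the equation (the paper's weak form \eqref{1.2}), and a compactness upgrade of the convolutions $J*\tilde u^\epsilon$ and $J*\chi_\epsilon$ that turns the weak/weak$^*$ convergences into strong ones so that the bilinear products pass to the limit. The only notable differences are cosmetic: the paper obtains strong $L^2$ convergence of the convolutions via the Dominated Convergence Theorem (pointwise limit plus uniform $L^\infty$ bound, then norm convergence in a Hilbert space), whereas you invoke Arzel\`a--Ascoli for uniform convergence on $\overline\Omega$; and for the case $\mathcal{X}\equiv 0$ the paper argues directly that $\int_\Omega \tilde u^\epsilon\varphi=\int_\Omega \chi_\epsilon\varphi\,u^\epsilon\to 0$ via $\|\chi_\epsilon\varphi\|_{L^2}\to 0$, while you read $u^*=0$ off the limit identity using the hypothesis $\int_{\R^N\setminus\Omega}J(x-y)\,dy\ge m>0$, which is perhaps more in keeping with the statement as written. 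Your closing remark about uniqueness matches the paper, which likewise defers this point to the argument of Theorem~\ref{theoD}.
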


Observe that to deal with the Neumann problem we need to assume extra conditions (besides \eqref{CharF})
on the sets $\Omega^\epsilon$, namely we need that  $\lambda_1^\epsilon \geq c>0$.
Concerning this assumption, $\lambda_1^\epsilon \geq c>0$, we will regard at $\lambda^\epsilon_1$ as the first eigenvalue of our Neumann problem.
Then, we will introduce a hypothesis involving the geometry of $\Omega^\epsilon$ and the kernel $J$ 
(see condition ${\bf (H_N)}$ 
in Section \ref{sect-Neumann}) which ensures the validity of $\lambda^\epsilon_1 \geq c$. We also include a simple example that shows that in general it could happend that $\lambda^\epsilon_1 =0$ (in this case we do not have existence of solutions to our nonlocal Neumann problem for a general datum $f$).
In Section \ref{sect-perodic}, we verify that this assumption ${\bf (H_N)}$ holds in the classical case of periodic perforated domains. This fact also allows us to obtain the limit equation to the Neumann problem \eqref{1.1intro} with \eqref{BCNintro} in the case of a periodically perforated domain.

Note the more involved term $\Lambda$ that appears in Theorem \ref{theo1}.
Rewriting it as 
$$
\Lambda(x) =  \int_{\R^N \setminus \Omega} J(x-y) \, dy + \int_{\R^N} J(x-y) 
\left( \mathcal{X}(y) - \mathcal{X}(x) \right) dy, \quad x \in \Omega, 
$$
we see that the kernel $J$ explicitly affects the extra term in the limit problem to the critical case for the Neumann problem.
This dependence of the extra term on the kernel $J$ does not occur in the Dirichlet problem where the coefficient $\nu$ only depends on the perturbation of the domain via $\mathcal{X}$.

Many techniques and methods have been developed in order to understand the effect of the holes in perforated domains on the solutions of PDE problems with different boundary values.
From pioneering works to recent ones we can still mention \cite{ GA, ADMET, CJM, DAPGR, IP,N, N2, ESP, MTM} and references therein that are
concerned with elliptic and parabolic equations, nonlinear operators, as well as Stokes and Navier-Stokes equations from fluid mechanics. 
Note that this kind of problem is an ``homogenization" problem, since the heterogeneous domain $\Omega^\epsilon$ is replaced by a homogeneous one, $\Omega$, in the limit. However, up to our knowledge, this is the first 
paper to deal with this kind of homogenization problem for a nonlocal operator with a non-singular kernel.

For homogenization results for singular kernels we refer to \cite{Ca,sw2,Wa} (we emphasize that those references
deal with homogenization in the coefficients involved in the equation and not with perforated domains as it is the case here). For random homogenization of an obstacle problem we refer to \cite{caffa2}. 
We also remark that the case of stochastic homogenization (this is, the case in which the holes
are randomly distributed inside $\Omega$) is not treated here.

On the other hand, nonlocal equations with non-singular kernel attracted some attention 
recently, see \cite{ElLibro,chfrt,Du,Ignat,Ignat2,RS} for a non-exhaustive list of references. 
We also mention \cite{JDE,PtAn} where asymptotic problems in such nonlocal equations have been recently studied.
Besides the applied models with such kernels (for example, we refer to elasticity models, \cite{Per}), the mathematical interest is mainly due to the fact that, in general, there is no regularizing effect and 
therefore no general compactness tools are available.

Now, we comment briefly on our hypothesis and results. 
First, 
we remark that we only obtain weak convergence in $L^2$ of the solutions $u^\epsilon$. This is due to the fact that
the nonlocal operator does not regularize (and hence solutions $u^\epsilon$ are expected to be bounded
in $L^2$ but nothing better) and is analogous to the fact that for the usual local case we have weak convergence in $H^1$.

On the other hand, we note that our results are valid under very general assumptions on the perturbed domains.
Namely we only require that the characteristic functions of the involved domains converge weakly. Of course, this
is verified in the periodic case that is our leading example (but we are not restricted to this case).

Indeed, we can allow many other different situations than perforated domains.
For instance, we can consider here a family of domains $\Omega^\epsilon$ whose the boundary presents a highly oscillatory behavior as the parameter $\epsilon \to 0$. 
We take as a fixed domain the rectangle $\Omega = (0,1) \times (-1,1)$, and as a family of perturbed domains 
$$
\Omega^\epsilon = \{ (x,y) \in \R^2 \, : \, x \in (0,1), \; -1 < y < 0.5 \, ( 1+  \sin (x/\epsilon) ) \}.
$$
We illustrate this simple situation in Figure \ref{figosc} below.  
Note that here, the period and amplitude of the oscillations are the same order with respect to the positive parameter $\epsilon$. 
Then, if $\chi$ is the characteristic function of the open set 
$$
Y = \{ (z, w) \in \R^2 \, : \, z \in (0, 2 \pi), \; -1 < w < 0.5 \, ( 1 + \sin z ) \},
$$
we have that the characteristic function $\chi_\epsilon$ of $\Omega^\epsilon$ is given by
$
\chi_\epsilon(x,y) = \hat \chi (x/\epsilon, y)
$
where $\hat \chi$ is the periodic extension of $\chi$ with respect to its first variable (that is, in the horizontal direction). 
Thus, we get from Averaging Theorem for oscillating functions that 
$$
\chi_\epsilon \rightharpoonup \mathcal{X} = \frac{1}{2 \pi} \int_0^{2 \pi} \hat \chi(s,\cdot) \, ds  \quad  \textrm{ weakly$^*$ in } L^\infty(\Omega)
$$
as $\epsilon \to 0$.
Since $\mathcal{X} \neq \chi_\Omega$ in general, we obtain from Theorems \ref{theoD} and \ref{theo1} 
a non trivial nonlocal limit problem to this case.

We quote here \cite{AB, BPP, CK, MSZ, NPS} and references therein where local problems to partial differential equations in highly oscillating domains 
are deeply studied. 

\begin{figure}[htp] 
\centering \scalebox{0.38}{\includegraphics{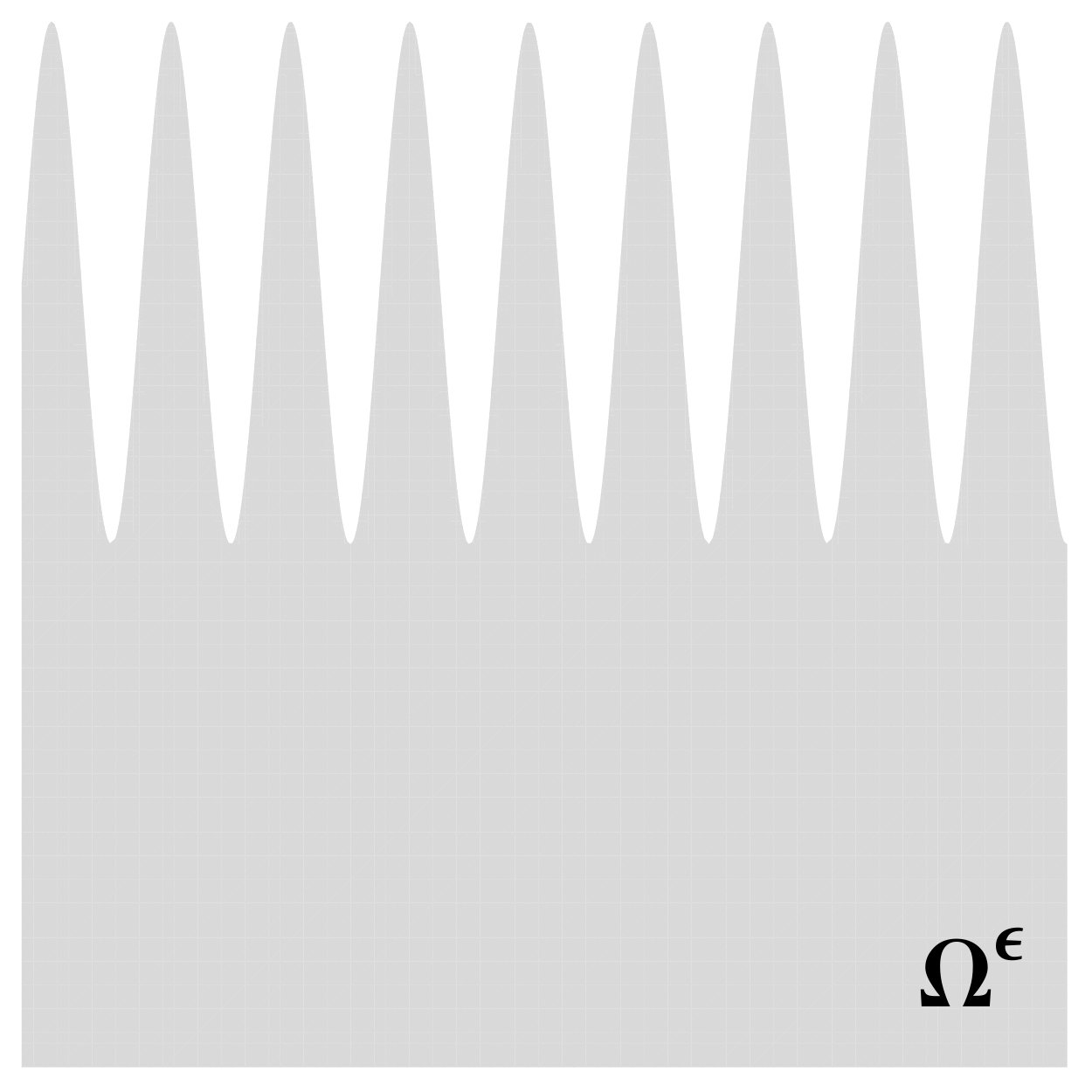}}
\caption{A domain $\Omega^\epsilon$ with oscillating boundary.}
\label{figosc} 
\end{figure}

Finally, our aim is to see how these problems behave when we introduce another parameter that
controls the size of the nonlocality.
In \cite{ElLibro} (see also \cite{CER1,CERW}) it is shown that we can obtain solutions to local problems as limits 
of solutions to nonlocal problems when we rescale the kernel considering
$$
J_\delta (z) = \frac{C}{\delta^{N+2}} J(\frac{z}{\delta})
$$
and letting $\delta \to 0$. Here $C= 2(\int_{\R^N} J(z) z_1^2)^{-1}$ is just a normalizing constant.
If we apply this idea to our nonlocal problem in the Dirichlet case we are lead to consider
\begin{equation} \label{Dir-ep-delsec.intro}
f(x) =\frac{C}{\delta^{N+2}} \int_{\R^N} J \left(\frac{x-y}{\delta}\right) 
(u^{\epsilon,\delta} (y) - u^{\epsilon,\delta} (x)) dy, \qquad x \in \Omega^\epsilon,
\end{equation}
with $u^{\epsilon,\delta} (x) \equiv 0$, for $x \in \R^N \setminus \Omega^\epsilon$.

Our aim is to study the limits $\epsilon \to 0$
(to have an homogenized limit problem) and $\delta \to 0$ (to approach local problems).
To perform this analysis, we restrict ourselves to the periodic case for perforated domains.  
That is, we consider $\Omega^\epsilon = \Omega \setminus \cup B_{r^\epsilon} (x_i)$
where $B_{r^\epsilon} (x_i)$ is a ball strictly contained in $\Omega$ centered 
in $x_i\in \Omega$ of the form $x_i \in 2 \epsilon \Z^N$ with $0 < r^\epsilon < \epsilon \leq 1$.

We show that, for $u^{\epsilon, \delta}$ we have that the iterated limits
$$
 \lim_{\epsilon \to 0} \lim_{\delta \to 0} \tilde{u}^{\epsilon, \delta} = v, \qquad \mbox{and}
 \qquad \lim_{\delta \to 0} \lim_{\epsilon \to 0} \tilde{u}^{\epsilon, \delta} = w,
$$
exist, but, in general, they do not commute, that is, in general $w \neq v$.
Here, $v$ is given by \eqref{v^*.intro}, that is,
$$
v = \left\{
\begin{array}{ll}
\mbox{the solution to } \Delta v = f , \qquad & \mbox{if } r^\epsilon \ll a^\epsilon, \\[4pt] 
\mbox{the solution to } \Delta v - \mu \, v = f , \qquad & \mbox{if } r^\epsilon = a^\epsilon, \\[4pt]
v = 0 , \qquad & \mbox{if } r^\epsilon \gg a^\epsilon, 
\end{array}
\right.
$$
with Dirichlet boundary conditions, $v=0$ on $\partial \Omega$,
and $w$ by 
\begin{equation}
\label{chichi.intro}
w = \left\{
\begin{array}{ll}
\mbox{the solution to } \Delta w = f \mbox{ with $w=0$ on $\partial \Omega$}
, \qquad & \mbox{if } r^\epsilon \ll b^\epsilon, \\[4pt] 
0, \qquad & \mbox{if } r^\epsilon = b^\epsilon.
\end{array}
\right.
\end{equation}

Note that $v$ is the limit for the local problem, in fact, when we compute first
the limit as $\delta \to 0$ of $\tilde{u}^{\epsilon, \delta} $ we obtain a solution to a local problem 
with the Laplacian, then the limit $\lim_{\epsilon \to 0} \lim_{\delta \to 0} \tilde{u}^{\epsilon, \delta} $
coincides with the one that holds in the local case. On the other hand, when we first take the limit as $\epsilon \to 0$
from our results we get a solution to a nonlocal problem (with a different size of the critical radius) and when we localize this problem letting $\delta \to 0$ we obtain $w$ a solution to a local problem but different from the previous one (in general). We remark that in the limit $\lim_{\delta \to 0} \lim_{\epsilon \to 0} \tilde{u}^{\epsilon, \delta}$ we have weak convergence in $L^2$ while for $\lim_{\epsilon \to 0} \lim_{\delta \to 0} \tilde{u}^{\epsilon, \delta} $
the convergence is strong in $L^2$.

A similar situation (rescaling the kernel with a parameter $\delta$ in a periodically perforated domain)
can be studied for the Neumann case.  
Now we consider
\begin{equation} \label{Neu-ep-delsec.intro}
f(x) =\frac{C}{\delta^{N+2}} \int_{\R^N \setminus A^\epsilon} J \left(\frac{x-y}{\delta}\right) 
(u^{\epsilon,\delta} (y) - u^{\epsilon,\delta} (x)) dy, \qquad x \in \Omega^\epsilon,
\end{equation}
with $
u^{\epsilon,\delta} (x) \equiv 0$, for $x \in \R^N \setminus \Omega$.
In this case we have
$$
\lim_{\epsilon \to 0} P_\epsilon \left( \lim_{\delta \to 0} u^{\delta,\epsilon} \right) = v \quad \textrm{ in } L^2(\Omega),
$$
with $P_\epsilon$ an extension operator. Here the limit $v$ is given by
\begin{equation}
\label{chi-Neu.intro}
v = \left\{
\begin{array}{ll}
\mbox{the solution to } \Delta v = f , \qquad & \mbox{if } r^\epsilon \ll b^\epsilon, \\[6pt] 
\displaystyle \mbox{the solution to } \sum_{i,j=1}^N  q_{ij} \frac{\partial^2 v}{\partial x_i \partial x_i} = \frac{|Q \setminus B|}{|Q|}f , \qquad & \mbox{if } r^\epsilon = b^\epsilon,
\end{array}
\right.
\end{equation}
with Dirichlet boundary conditions, $v=0$ on $\partial \Omega$.
The constants $q_{ij}$ are the \emph{homogenized coefficients} and can be explicitly 
computed (see \cite{CioS} and Section \ref{sect-rescales}).

On the other hand, the limit 
$$
\lim_{\delta \to 0} \left( \lim_{\epsilon \to 0} \tilde{u}^{\delta,\epsilon} \right) = w 
$$
exists (but this time the convergence is weak in $L^2(\Omega)$), and is given by
\begin{equation}
\label{chichi-Neu.intro}
w = \left\{
\begin{array}{ll}
\mbox{the solution to } \Delta w = f , \mbox{ with $w=0$ on $\partial \Omega$},\qquad & \mbox{if } r^\epsilon \ll b^\epsilon, \\[6pt] 
0 , \qquad & \mbox{if } r^\epsilon = b^\epsilon.
\end{array}
\right.
\end{equation}

The paper is organized as follows: in Section \ref{sect-Dirichlet} we deal with the Dirichlet
problem while in Section \ref{sect-Neumann} we consider the Neumann case. 
In Section \ref{sect-perodic} we deal with the case of periodically distributed holes. 
Finally, in Section \ref{sect-rescales} we rescale the kernel.

\section{The Dirichlet problem.} \label{sect-Dirichlet}
\setcounter{equation}{0}

We deal here with the nonlocal Dirichlet problem. For $f \in L^2(\Omega)$, we consider 
\begin{equation} \label{1.1.Dir}
f(x) = \int_{\R^N } J (x-y) (u^\epsilon (y) - u^\epsilon (x)) dy, \qquad x  \in \Omega^\epsilon
\end{equation}
with 
\begin{equation} \label{Cond.Dir}
u^\epsilon (x) \equiv 0, \qquad x \in \R^N \setminus \Omega^\epsilon.
\end{equation}

Observe that existence and uniqueness of our problem
follows considering the variational problem
$$
\min_{u \in W_\epsilon} \frac14 \int_{\R^N} \int_{\R^N} J (x-y) (u (y) - u (x))^2 dy\, dx -
\int_{\R^N} f(x) u (x) \, dx
$$
with
$$
W_\epsilon = \left\{ u \in L^2(\Omega^\epsilon) \quad : \quad u^\epsilon (x) \equiv 0, \; x \in \R^N \setminus \Omega^\epsilon \right\}.
$$
It follows from (${\bf H_J}$) that the unique minimizer (that we call $u^\epsilon$) is a solution to \eqref{1.1.Dir} and \eqref{Cond.Dir} since it holds that
\begin{equation} \label{1.1.Dweak}
\begin{array}{l} 
\displaystyle 0   =  - \frac12 \int_{\R^N} \int_{\R^N} J(x-y) (u^\epsilon(y) - u^\epsilon(x) ) (\varphi(y) - \varphi(x) ) \, dy dx 
- \int_{\R^N}  f(x) \varphi (x) \, dx     \\[10pt]
 \qquad \displaystyle =   \int_{\R^N} \varphi(x) \int_{\R^N} J (x-y) (u^\epsilon (y) - u^\epsilon (x)) \, dy dx
  - \int_{\R^N}  f(x) \varphi (x) \, dx    
\end{array}
\end{equation}
for any $\varphi \in L^2 (\R^N)$ with $\varphi(x) \equiv 0$ for $x \in \R^N \setminus \Omega^\epsilon$. 
Note that, if $u^\epsilon \in L^2(\Omega^\epsilon)$ is a solution to \eqref{1.1.Dir} and \eqref{Cond.Dir}, then
it is the unique minimizer.
Taking $\varphi =u^\epsilon$ in \eqref{1.1.Dweak} we get
\begin{equation} \label{DirEig}
\begin{array}{l}
\displaystyle - \int_{\Omega^\epsilon}  f(x) u^\epsilon (x) \, dx  \displaystyle 
=  \frac12
  \int_{\R^N}  \int_{\R^N} J (x-y) (u^\epsilon (y) - u^\epsilon (x))^2\, dy dx 
 \geq \beta_1^\epsilon  \int_{\Omega^\epsilon} (u^\epsilon(x))^2 \, dx
  \end{array}
\end{equation}
where $\beta_1^\epsilon$ is the first eigenvalue associated with this operator in the space
$W_\epsilon$. It is given by
\begin{equation} \label{eigenD}
\beta_1^\epsilon = \inf_{u\in W_\epsilon}  \frac{ \displaystyle
\frac12 \int_{\R^N} \int_{\R^N} J (x-y) (u (y) - u (x))^2 dy\, dx}{
\displaystyle \int_{\Omega^\epsilon} u^2(x) \, dx}.
\end{equation}
From \cite[Proposition 2.3]{ElLibro} we know that $\beta_1^\epsilon$ is strictly positive.
Therefore, due to \eqref{DirEig}, we get 
$$
\| u^\epsilon \|_{L^2(\Omega^\epsilon)}  \leq \frac 1{\beta_1^\epsilon} \| f \|_{L^2(\Omega)}. 
$$
Thus, since $\beta_1^\epsilon \geq c>0$ with $c$ independent of $\epsilon$ (see Lemma \ref{LEBD} below), we also obtain 
$$\| u^\epsilon \|_{L^2(\Omega^\epsilon)} \leq K$$
for some positive constant $K$ depending only on $f$ (and so, independent of $\epsilon$). 
Then, along a subsequence if necessary, 
\begin{equation} \label{weakLD}
\tilde u^\epsilon \rightharpoonup u^* \quad \textrm{ weakly in } L^2 (\Omega)
\end{equation}
as $\epsilon \to 0$ where $\, \tilde \cdot \,$ denotes the extension by zero applied to functions defined in subsets of $\R^N$.

Thus, if $\chi_\epsilon$ is the characteristic function of $\Omega^\epsilon$ and $\tilde u^\epsilon$ is the extension by zero of  
$u^\epsilon$ to $\R^N$, we can use $\int_{\R^N} J(x-y) \, dy=1$ to rewrite \eqref{1.1.Dweak} 
\begin{equation} \label{1.2.D}
\begin{array}{l}
 \displaystyle\int_{\Omega}  \chi_\epsilon(x) \, \varphi (x) f(x) \, dx  =  \int_{\Omega}  \chi_\epsilon(x) \, \varphi (x)  
 \left( \int_{\R^N} J(x-y) \, \tilde u^\epsilon (y) \, dy \right) dx \displaystyle 
  - 
  \int_{\Omega}  \varphi (x) \, \tilde u^\epsilon(x) \, dx
  \end{array}
\end{equation}
for any $\varphi \in L^2(\Omega)$.

%

Now we are ready to proceed with the proof of Theorem \ref{theoD}.

\begin{proof}[Proof of Theorem \ref{theoD}]
We need to pass to the limit in \eqref{1.2.D}.
In order to do that, we have to evaluate 
\begin{equation} \label{Ufunction}
U_\epsilon(x) = \int_{\R^N} J(x-y) \tilde u^\epsilon(y) \, dy 
\end{equation}
which is defined for any $x \in \R^N$.
From \eqref{weakLD}, we have that  
$$
\int_{\R^N} J(x-y) \tilde u^\epsilon(y) \, dy = \int_\Omega J(x-y) \tilde u^\epsilon(y) \, dy 
\to \int_\Omega J(x-y) \, u^*(y) \, dy
$$
as $\epsilon \to 0$ for each $x \in \R^N$, and then, 
\begin{equation} \label{eqU1}
U_\epsilon(x) \to U_0(x) = \int_\Omega J(x-y) \, u^*(y) \, dy, \qquad \textrm{ for all } x \in \R^N.
\end{equation}
Hence, since $u^\epsilon$ is uniformly bounded in $L^2(\Omega^\epsilon)$ and  
\begin{equation} \label{eqU2}
|U_\epsilon(x)| =  \left| \int_\Omega J(x-y) \, \tilde u^\epsilon (y) \, dy \right| 
 \leq  |\Omega|^{1/2} \| J \|_\infty \| u^\epsilon \|_{L^2(\Omega^\epsilon)},
\end{equation}
we obtain by Dominated Convergence Theorem that
$$
\int_{\Omega} U^\epsilon(x) \, \varphi(x) \, dx \to \int_\Omega U_0(x) \, \varphi(x) \, dx 
$$
for all $\varphi \in L^2(\Omega)$, and then, $U^\epsilon \rightharpoonup U_0$  weakly in $L^2(\Omega)$.
Indeed, we can prove that 
\begin{equation} \label{UL2C}
U^\epsilon \to U_0 \quad \textrm{ strongly in } L^2(\Omega). 
\end{equation}
It follows from \eqref{eqU1} and \eqref{eqU2} that 
$$
(U_\epsilon(x))^2 \to (U_0(x))^2 \quad \textrm{ for all } x \in \R^N 
$$
with 
$$
|U_\epsilon(x)|^2 \leq  |\Omega| \| J \|^2_\infty \| u^\epsilon \|^2_{L^2(\Omega^\epsilon)}.
$$
Consequently, using again the Dominated Convergence Theorem, we get 
$$
\| U^\epsilon \|_{L^2(\Omega)} \to \|U_0\|_{L^2(\Omega)}
$$
as $\epsilon \to 0$ proving \eqref{UL2C} since we are working in Hilbert spaces.

We now can combine \eqref{CharF}, \eqref{weakLD}, \eqref{1.2.D} and \eqref{Ufunction} to obtain 
\begin{equation} \label{1.3.D}
\begin{array}{l}
 \displaystyle\int_{\Omega}  \mathcal{X}(x) \, \varphi (x) f(x) \, dx  =  \int_{\Omega}  \mathcal{X}(x) \, \varphi (x)  
 \left( \int_{\R^N} J(x-y) \, u^* (y) \, dy \right) dx \displaystyle 
  - 
  \int_{\Omega}  \varphi (x) \, u^*(x) \, dx
  \end{array}
\end{equation}
for any $\varphi \in L^2(\Omega)$.
By the addition and subtraction of the term 
$$
\int_\Omega \varphi(x) \, \mathcal{X}(x) \, u^*(x) \, dx = \int_\Omega \varphi(x) \, \mathcal{X}(x) \int_{\R^N} J(x-y) \, u^*(x) \, dy \, dx
$$
we still can rewrite \eqref{1.3.D} as 
\begin{equation} \label{1.4.D}
\begin{array}{l}
 \displaystyle\int_{\Omega}  \mathcal{X}(x) \, \varphi (x) f(x) \, dx  =  \int_{\Omega}  \mathcal{X}(x) \, \varphi (x)  
 \left( \int_{\R^N} J(x-y) \, ( u^* (y) - u^*(x) ) \, dy \right) dx \displaystyle 
\\[10pt]
  \qquad \qquad \qquad  \qquad  \qquad  \qquad \displaystyle 
  - 
  \int_{\Omega}  \varphi (x) \, u^*(x) \, (1 - \mathcal{X}(x)) \, dx
  \end{array}
\end{equation}
which implies 
\begin{equation} \label{1.1.a.e.}
\mathcal{X}(x) \, f(x) = \mathcal{X}(x) \int_{\R^N}  J (x-y) (u^*(y) - u^*(x) ) \, dy - (1 - \mathcal{X}(x)) \, u^*(x), \quad a.e. \;  \Omega, 
\end{equation}
with 
$u^*(x) \equiv 0$, $x \in \R^N \setminus \Omega$.

Finally, we show that $u^*$ is the unique solution of \eqref{1.1.a.e.}. This fact implies the convergence of whole sequence $\tilde u^\epsilon$.
To do that, we consider the set $D$ where the function $\mathcal{X}$ vanishes  
\begin{equation} \label{dominioD}
D = \{ x \in \Omega \, : \, \mathcal{X}(x) = 0 \}.
\end{equation}
From \eqref{1.1.a.e.}, we have $u^*(x) = 0$ for all $x \in D$.
Hence, if $D = \Omega$ a.e., $u^*(x)=0$ a.e. $x \in \R^N$ is the unique solution.
Thus, let us consider that $\Omega \setminus D$ is a nontrivial measurable set.
We can rewrite equation \eqref{1.1.a.e.} as 
\begin{equation} \label{a.e.D1}
f(x) = \int_{\R^N}  J (x-y) (u^*(y) - u^*(x) ) \, dy - \frac{(1 - \mathcal{X}(x))}{\mathcal{X}(x)} \, u^*(x), \quad a.e. \;  \Omega \setminus D, 
\end{equation}
with 
$u^*(x) \equiv 0$, $x \in \R^N \setminus (\Omega \setminus D)$.
Hence, if $u^*$ and $v^*$ are solutions of \eqref{a.e.D1}, $w^* = u^* - v^*$ satisfies 
\begin{equation} \label{a.e.D2}
0 = \int_{\R^N}  J (x-y) (w^*(y) - w^*(x) ) \, dy - \frac{(1 - \mathcal{X}(x))}{\mathcal{X}(x)} \, w^*(x) , \quad a.e. \;  \Omega \setminus D. 
\end{equation}
Consequently, $\frac{(1 - \mathcal{X})}{\mathcal{X}} \, w^* \in L^2(\Omega \setminus D)$, and then, 
\begin{eqnarray*}
\int_{\Omega \setminus D} \frac{1 - \mathcal{X}(x)}{\mathcal{X}(x)} \, (w^*(x))^2 \, dx 
 =  - \frac 12 \int_{\R^N} \int_{\R^N} J (x-y) (w^*(y) - w^*(x) )^2 \, dy \, dx 
 \leq  0.  
\end{eqnarray*}
Since $\frac{1 - \mathcal{X}(x)}{\mathcal{X}(x)} \geq 0$ in $\Omega \setminus D$, we get 
$$
\frac{1 - \mathcal{X}(x)}{\mathcal{X}(x)} \, (w^*(x))^2 = 0 \quad \textrm{ a.e. } x \in \Omega \setminus D.
$$ 
This combined with \cite[Proposition 2.2]{ElLibro} completes the proof.
\end{proof}

If we still assume in the hypotheses of Theorem \ref{theoD} that the characteristic functions $\chi_\epsilon$ converges weakly star to 
$\mathcal{X}(x) = 0$ a.e. in $\Omega$, we get that the limit function $u^*$ must be null with strong convergence in $L^2$. 

\begin{cor} \label{corD}
Under the hypotheses of Theorem \ref{theoD} with $\mathcal{X}(x) = 0$ a.e. $x \in \Omega$, we obtain 
$u^*(x) = 0$ a.e. $x \in \R^N$ 
with  
$$
\tilde u^\epsilon \to 0, \quad \textrm{ strongly in } L^2(\Omega).
$$
\end{cor}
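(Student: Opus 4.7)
My plan is to split the proof into two parts: first identify the weak limit $u^*$ as the zero function using the limit equation from Theorem \ref{theoD}, and then upgrade the weak convergence to strong convergence using a standard trick that tests the weak formulation of the $\epsilon$-problem against $\tilde u^\epsilon$ itself.

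For the first step, I would substitute $\mathcal{X}(x) \equiv 0$ directly into the limit equation
\[
\mathcal{X}(x)\, f(x) = \mathcal{X}(x) \int_{\R^N} J(x-y)(u^*(y)-u^*(x))\,dy - (1-\mathcal{X}(x))\, u^*(x),
\]
which collapses immediately to $u^*(x) = 0$ for a.e.\ $x \in \Omega$; combined with the boundary condition $u^*(x) \equiv 0$ for $x \in \R^N \setminus \Omega$, this yields $u^* \equiv 0$ in $\R^N$. By Theorem \ref{theoD} we already know $\tilde u^\epsilon \rightharpoonup u^* = 0$ weakly in $L^2(\Omega)$.

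For the second step, the strategy is to compute $\|\tilde u^\epsilon\|_{L^2(\Omega)}^2$ from the identity \eqref{1.2.D} and show it tends to zero. Choosing $\varphi = \tilde u^\epsilon \in L^2(\Omega)$ in \eqref{1.2.D} and using that $\chi_\epsilon \tilde u^\epsilon = \tilde u^\epsilon$ (since $\tilde u^\epsilon$ vanishes off $\Omega^\epsilon$), I obtain
\[
\|\tilde u^\epsilon\|_{L^2(\Omega)}^2 = \int_{\Omega} \tilde u^\epsilon(x)\, U_\epsilon(x)\,dx - \int_{\Omega} \tilde u^\epsilon(x)\, f(x)\,dx,
\]
where $U_\epsilon$ is the function defined in \eqref{Ufunction}. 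Now I invoke the two convergences already established in the proof of Theorem \ref{theoD}: the weak convergence $\tilde u^\epsilon \rightharpoonup 0$ in $L^2(\Omega)$, and the strong convergence $U_\epsilon \to U_0$ in $L^2(\Omega)$ from \eqref{UL2C}, where $U_0(x) = \int_\Omega J(x-y) u^*(y)\,dy = 0$ because $u^* = 0$. The first integral on the right-hand side is the pairing of a weakly null sequence against a strongly convergent (to zero) sequence, hence vanishes; the second integral vanishes by weak convergence against the fixed test function $f \in L^2(\Omega)$. This gives $\|\tilde u^\epsilon\|_{L^2(\Omega)} \to 0$, i.e., the desired strong convergence.

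No step looks genuinely delicate here; the only point requiring care is to notice that $U_\epsilon \to 0$ \emph{strongly} (not merely weakly) in $L^2(\Omega)$, because this is precisely what allows the product $\int \tilde u^\epsilon U_\epsilon$ to pass to the limit despite $\tilde u^\epsilon$ converging only weakly. Fortunately this strong convergence is already contained in \eqref{UL2C}, so no new estimate is needed.
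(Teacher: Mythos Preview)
Your proof is correct and follows essentially the same approach as the paper: identify $u^*=0$ from the limit equation, then test \eqref{1.2.D} with $\varphi=\tilde u^\epsilon$ and pass to the limit using the weak convergence $\tilde u^\epsilon\rightharpoonup 0$ together with the strong convergence $U_\epsilon\to 0$ from \eqref{UL2C}. Your write-up is in fact slightly more explicit than the paper's about why the product term vanishes.
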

\begin{proof}
From the limit equation in Theorem \ref{theoD} is very easy to see that $u^* = 0$. 
We obtain strong convergence just observing that the norm of $\tilde u^\epsilon$ also converges to $0$ as $\epsilon \to 0$.
Indeed, from \eqref{1.2.D} with $\varphi = \tilde u^\epsilon$ we have 
\begin{eqnarray*}
\| \tilde u^\epsilon \|_{L^2(\Omega)}^2 & = & \int_{\Omega} \tilde u^\epsilon(x) \int_{\R^N} J(x-y) \, \tilde u^\epsilon(y) \, dy - \int_\Omega \tilde u^\epsilon(x) \, f(x) \, dx \\
& \to & \int_{\Omega} u^*(x) \int_{\R^N} J(x-y) \, u^*(y) \, dy - \int_\Omega u^*(x) \, f(x) \, dx = 0.
\end{eqnarray*}
\end{proof}

\begin{remark} {\rm
It is not difficult to see that a more general situation can be considered in \eqref{1.1.Dir} and \eqref{Cond.Dir}.
In a similar way we can deal with
$$
f^\epsilon(x) = \int_{\R^N} J(x-y) (u^\epsilon(y) - u^\epsilon(x)) \, dy
$$ 
assuming $f^\epsilon \to f$ strongly in $L^2(\Omega)$ since 
$\|f^\epsilon\|_{L^2(\Omega)}$ remains uniformly bounded in $\epsilon$ and 
$$
\int_\Omega \chi_\epsilon(x) \, f^\epsilon(x) \, dx \to \int_\Omega \mathcal{X}(x) \, f(x) \, dx
$$
as $\epsilon \to 0$.}
\end{remark}

Now we finish the section showing that the sequence of first eigenvalues converges to a positive value. Therefore, this sequence possesses a positive lower bound.
\begin{lemma} \label{LEBD}
Let $\beta^\epsilon_1$ be the family of first eigenvalues introduced in \eqref{eigenD}.
Then, there exist $\beta_1>0$, such that 
$$
\beta^\epsilon_1 \to \beta_1, \quad \textrm{ as } \epsilon \to 0.
$$ 
Consequently, there exist $\epsilon_0>0$ and $c>0$ with 
$$
\beta^\epsilon_1 > c >0, \quad \forall \epsilon \in (0,\epsilon_0). 
$$
\end{lemma}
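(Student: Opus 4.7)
The plan is first to establish the uniform lower bound by a test function space inclusion, and then to identify the limit by a compactness argument on the normalized first eigenfunctions.

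For the lower bound, I would observe that
\[
W_\epsilon \;\subset\; W_0 := \{u \in L^2(\Omega) \, : \, u \equiv 0 \text{ in } \R^N \setminus \Omega\},
\]
since a function vanishing off $\Omega^\epsilon \subset \Omega$ automatically vanishes off $\Omega$, and for any such $u$ we have $\int_{\Omega^\epsilon} u^2 = \int_\Omega u^2$. Hence the Rayleigh quotient defining $\beta_1^\epsilon$ is simply the restriction to the smaller class $W_\epsilon$ of the one defining the first Dirichlet eigenvalue $\beta_1^0$ on the full domain $\Omega$, which is strictly positive by Proposition 2.3 of \cite{ElLibro}. This yields $\beta_1^\epsilon \geq \beta_1^0 > 0$ uniformly in $\epsilon$, together with the trivial upper bound $\beta_1^\epsilon \leq 1$ (since $J \geq 0$). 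Notice that this alone already implies the ``Consequently'' part of the statement.

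For convergence to a single value, I would pick for each $\epsilon$ a normalized positive first eigenfunction $\phi^\epsilon \in W_\epsilon$, so that along a subsequence $\beta_1^\epsilon \to \beta^* \in [\beta_1^0, 1]$ and $\tilde\phi^\epsilon \rightharpoonup \phi^*$ weakly in $L^2(\Omega)$. Running the argument of the proof of Theorem \ref{theoD} with forcing term $-\beta_1^\epsilon \phi^\epsilon$ (and using, as there, the strong $L^2$ convergence $U^\epsilon = J*\tilde\phi^\epsilon \to U_0 := J*\phi^*$) gives the limit relation
\[
(1-\beta^*)\,\phi^*(x) = \mathcal{X}(x)\,U_0(x), \qquad x \in \Omega.
\]
Then I would exploit the pointwise identity $\tilde\phi^\epsilon = (1-\beta_1^\epsilon)^{-1} \chi_\epsilon U^\epsilon$ coming from the eigenvalue equation together with $\|\tilde\phi^\epsilon\|_{L^2(\Omega)}^2 = 1$: since $(U^\epsilon)^2 \to U_0^2$ in $L^1(\Omega)$ and $\chi_\epsilon \rightharpoonup \mathcal{X}$ weakly$^*$, this yields the key identity
\[
\int_\Omega \mathcal{X}(x)\, U_0(x)^2 \, dx = (1-\beta^*)^2.
\]
When $\mathcal{X} \equiv 0$ this already forces $\beta^* = 1$; in the nondegenerate case it guarantees that $\phi^* \not\equiv 0$, and the limit equation then identifies $\beta^*$ as the principal eigenvalue of the limit nonlocal operator corresponding to the nonnegative eigenfunction $\phi^*$. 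Uniqueness of this principal eigenvalue (via a Perron--Frobenius type argument) makes $\beta^*$ independent of the subsequence, so the entire sequence $\beta_1^\epsilon$ converges to a positive limit $\beta_1$.

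The main obstacle will be ruling out that the weak subsequential limit $\phi^*$ is trivial --- a priori an $L^2$-normalized sequence can lose all its mass in the weak limit --- and showing that the limiting operator has a unique principal eigenvalue. The norm identity above supplies exactly the rigidity needed: the interplay between the weak-star convergence of $\chi_\epsilon$ and the compactness-like strong $L^2$ convergence of the convolution pins $\beta^*$ down unambiguously, allowing passage from subsequential to full convergence.
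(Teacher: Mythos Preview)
Your domain--monotonicity argument for the uniform lower bound is correct and is genuinely cleaner than what the paper does. Since $W_\epsilon \subset W_0$ (after extension by zero) and the Rayleigh quotient is unchanged under this extension, $\beta_1^\epsilon \ge \beta_1^0 > 0$ follows immediately, where $\beta_1^0$ is the first Dirichlet eigenvalue on the fixed domain $\Omega$. The paper instead extracts a subsequential limit of $(\beta_1^\epsilon,\tilde\phi^\epsilon)$ and argues case by case (according to whether the weak limit $\phi^*$ is trivial, and whether $\mathcal X\equiv 1$) that the limiting value is strictly positive. Your one--line monotonicity observation bypasses all of that and already delivers the ``Consequently'' clause, which is the only part of the lemma actually used later in the paper.

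For the full convergence $\beta_1^\epsilon\to\beta_1$, however, there is a gap in your sketch. The identity $\int_\Omega \mathcal X\,U_0^2=(1-\beta^*)^2$ is a nice consequence of $(1-\beta_1^\epsilon)\tilde\phi^\epsilon=\chi_\epsilon U^\epsilon$, but it does \emph{not} force $\phi^*\not\equiv 0$ when $\mathcal X\not\equiv 0$: if $\phi^*=0$ then $U_0=0$ and the identity merely yields $\beta^*=1$, which is perfectly consistent. So you have not excluded the possibility that along one subsequence $\phi^*\neq 0$ and $\beta^*$ equals the principal eigenvalue of the limit operator, while along another subsequence $\phi^*=0$ and $\beta^*=1$. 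The appeal to a ``Perron--Frobenius type'' uniqueness of the principal eigenvalue for the limit operator is also left unproved, and the limit operator (with the weight $\mathcal X$ and the zero--order term $(1-\mathcal X)$) is not quite the standard setting for such a statement. In fairness, the paper's own proof has the same shortcoming: it shows each subsequential limit is positive but does not identify the limit uniquely. Since only the uniform positive lower bound is ever used, this does not affect the rest of the paper --- and your monotonicity argument supplies that bound outright.
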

\begin{proof}
We start observing that $\beta^\epsilon_1$ is a bounded sequence with respect to $\epsilon$.
Indeed, it follows from \cite[Proposition 2.3]{ElLibro} that 
$$
0<\beta^\epsilon_1<1.
$$
Thus, we can extract a subsequence, still denoted by $\beta_1^\epsilon$, such that $\beta_1^\epsilon \to \beta_1$.
Since $\beta_1^\epsilon$ is strictly positive, $\beta_1$ is non negative.

Now let $\phi^\epsilon$ be the associated eigenfunction to $\beta_1^\epsilon$ with $\| \phi^\epsilon \|_{L^2(\Omega^\epsilon)} =1$ (the existence of such eigenfunction can be proved as in \cite{jorge}). 
Hence,  
$$
- \beta_1^\epsilon \int_{\Omega^\epsilon} \phi^\epsilon(x) \, \varphi(x) \, dx 
= \int_{\Omega^\epsilon} \varphi(x) \int_{\R^N} J(x-y) ( \phi^\epsilon(y) - \phi^\epsilon(x) ) \, dy dx
$$
for any $\varphi \in L^2(\Omega)$ with $\phi^\epsilon$ vanishing in $\R^N \setminus \Omega^\epsilon$.
Therefore, using the extension by zero to whole space and the characteristic function $\chi_\epsilon$ of $\Omega^\epsilon$, we get 
\begin{equation} \label{eqeigenD1}
\begin{array}{l}
 \displaystyle - \beta_1^\epsilon \int_{\Omega} \tilde \phi^\epsilon(x) \, \varphi(x) \, dx 
= \int_{\Omega} \chi_\epsilon(x) \, \varphi(x) \int_{\R^N} J(x-y) \, \tilde \phi^\epsilon(y) \, dy dx \displaystyle 
\\[10pt]
  \qquad \qquad \displaystyle 
 - \int_{\Omega} \varphi(x) \, \tilde \phi^\epsilon(x) \int_{\R^N} J(x-y) \, dy dx.
\end{array}
\end{equation}
Due to $\| \phi^\epsilon \|_{L^2(\Omega^\epsilon)} =1$, we can extract a subsequence, still denoted by $\tilde \phi^\epsilon$, such that 
\begin{equation} \label{phi*}
\tilde \phi^\epsilon \rightharpoonup \phi^* \quad \textrm{ weakly in } L^2(\Omega)
\end{equation}
with $\phi^* \equiv 0$ as $x \in \R^N \setminus \Omega$. 

First, let us discuss the case $\phi^* = 0$. 
From \eqref{eqeigenD1} with $\varphi = \tilde \phi^\epsilon$, we have that 
$$
  (1- \beta_1^\epsilon) \int_{\Omega} {\tilde \phi^\epsilon(x)}^2 \, dx 
= \int_{\Omega} \tilde \phi^\epsilon(x) \int_{\R^N} J(x-y) \, \tilde \phi^\epsilon(y) \, dy dx,  
$$
and then,
\begin{equation} \label{nozero}
  (1- \beta_1^\epsilon) = \int_{\Omega} \tilde \phi^\epsilon(x) \int_{\R^N} J(x-y) \, \tilde \phi^\epsilon(y) \, dy dx 
\end{equation}
since $\| \phi^\epsilon \|_{L^2(\Omega^\epsilon)} =1$. 
Arguing as in \eqref{Ufunction}, we can show from assumptions \eqref{phi*} and the fact that 
$\phi^*(x) = 0$ a.e. in $\R^N$ that 
$$
\int_{\R^N} J(\cdot-y) \, \tilde \phi^\epsilon(y) \, dy  \to 0 \quad \textrm{ strongly in } L^2(\Omega),
$$
as $\epsilon \to 0$. 
Hence, it follows from \eqref{phi*} and \eqref{nozero} that $\beta_1^\epsilon \to 1$ proving our result.
Therefore, we can suppose $\phi^* \neq 0$ in $L^2(\Omega)$.

Now, we can argue as in \eqref{1.3.D} to pass to the limit in \eqref{eqeigenD1} obtaining the following limit equation
\begin{equation} 
\begin{array}{l}
 \displaystyle - \beta_1 \int_{\Omega} \phi^*(x) \, \varphi(x) \, dx 
 \displaystyle = \int_{\Omega} \mathcal{X}(x) \, \varphi(x) \int_{\R^N} J(x-y) \,  \phi^*(y) \, dy dx  
 - \int_{\Omega} \varphi(x) \, \phi^*(x) \, dx \\[10pt]
 \qquad  = \displaystyle \int_\Omega  \mathcal{X}(x) \, \varphi(x) \int_{\R^N} J(x-y) \, ( \phi^*(y) - \phi^*(x) ) \, dy \, dx \displaystyle 
 - \int_\Omega (1-\mathcal{X}(x)) \, \phi^*(x) \, \varphi(x) \, dx,
\end{array}
\end{equation}
which can be rewritten as 
\begin{equation} \label{eigen.a.e.}
-\beta_1 \, \phi^*(x) = \mathcal{X}(x) \int_{\R^N}  J (x-y) (\phi^*(y) - \phi^*(x) ) \, dy - (1 - \mathcal{X}(x)) \, \phi^*(x), \quad a.e. \; \Omega, 
\end{equation}
with 
$\phi^*(x) \equiv 0$, $x \in \R^N \setminus \Omega$.

If $\mathcal{X}(x) \equiv 1$ a.e. $\Omega$, 
it follows from \eqref{eigen.a.e.} that $\beta_1$ is the first eigenvalue of the self-adjoint operator $T:L^2(\Omega) \mapsto L^2(\Omega)$ given by
$$
T(\phi) = \int_{\R^N}  J (x-y) (\phi(y) - \phi(x) ) \, dy.
$$
Hence, from \cite[Proposition 2.3]{ElLibro} we get $\beta_1>0$, and the proof is completed. 

Thus, let us assume $\mathcal{X} \neq 1$ in $L^\infty(\Omega)$, and consider the set
 $D$ given by the vanishing points of $\mathcal{X}$ introduced in \eqref{dominioD}.
From \eqref{eigen.a.e.}, we get $\phi^*(x) \, (1 - \beta_1) = 0$ for all $x \in D$.
If $\beta_1 =1$, the proof is complete. If it is not the case, we have 
$\phi^*(x) \equiv 0$ in $x \in D$. Let us suppose $\phi^*(x) \equiv 0$ in $D$.
In fact, without loss of generality, we can assume $\phi^*(x) \neq 0$ in $\Omega \setminus D$.
From \eqref{eigen.a.e.} we obtain
\begin{equation} \label{eigen.a.e.2}
\frac{1 - \mathcal{X}(x) - \beta_1}{\mathcal{X}(x)} \, \phi^*(x)  = \int_{\R^N}  J (x-y) (\phi^*(y) - \phi^*(x) ) \, dy, \quad a.e. \; \Omega \setminus D, 
\end{equation}
with 
$\phi^*(x) \equiv 0$ wherever  $x \in \R^N \setminus (\Omega \setminus D)$.
Since $\phi^* \in L^2(\Omega)$, it follows from \eqref{eigen.a.e.2} that $\frac{1 - \mathcal{X} - \beta_1}{\mathcal{X}} \, \phi^*$ also belongs to $L^2(\Omega \setminus D)$.
Then, we also obtain from \eqref{eigen.a.e.2} that 
$$
\int_{\Omega \setminus D} \frac{1 - \mathcal{X}(x) - \beta_1}{\mathcal{X}(x)} \, (\phi^*(x))^2 \, dx 
 =  - \frac 12 \int_{\R^N} \int_{\R^N} J (x-y) (\phi^*(y) - \phi^*(x) )^2 \, dy dx  <  0.  
$$  
Consequently, there exists an open set $\tilde{D} \subset \Omega \setminus D$ such that 
$$
1 - \mathcal{X}(x) - \beta_1  < 0, \quad \forall x \in \tilde{D}.
$$   
Hence, since $0\leq\mathcal{X}(x) \leq 1$, we conclude that $\beta_1 > 0$ finishing the proof. 
\end{proof}

\begin{remark} {\rm
We also observe that $\beta_1 = 1$ is not an eigenvalue of the operator
$$
L^2(\Omega) \ni \phi \quad \to \quad \mathcal{X}(\cdot) \int_{\R^N} J(\cdot-y) \, (\phi(y) - \phi(\cdot) ) \, dy - (1 - \mathcal{X}(\cdot)) \, \phi(\cdot) \in L^2(\Omega).
$$ 
Indeed, it is equivalent to the existence of a nonzero function $\phi^* \in L^2(\Omega)$ satisfying   
\begin{equation} \label{B1eig}
- \phi^*(x) = \int_{\R^N} J(x-y) \, (\phi^*(y) - \phi^*(x) ) \, dy, \quad a.e. \; \Omega \setminus D, 
\end{equation}
with $\phi^*(x) \equiv 0$ in $\R^N \setminus (\Omega \setminus D)$ where 
$D \subset \Omega$ is the set introduced in \eqref{dominioD}. 
Now, it is known from \cite[Proposition 2.2]{ElLibro} that \eqref{B1eig} implies $\phi^*(x) \equiv 0$ in $\Omega \setminus D$.
Since $\phi^*(x) = 0$ in $D$, we get the contradiction.           }
\end{remark}

\begin{remark} {\rm 
Let us suppose that we are in the classic situation in Homogenization Theory in which the family of perforated domains $\Omega^\epsilon$ possesses an
extension operator such as 
$$P_\epsilon \in \mathcal{L}(L^2(\Omega^\epsilon); L^2(\Omega)) \cap \mathcal{L}(H^1(\Omega^\epsilon); H^1(\Omega))$$
with bounded norm (independently of $\epsilon$).
Let us also assume that the data are smooth, by this we mean that $f \in H^1(\Omega)$ and $J \in \mathcal{C}^1(\R^N,\R)$. 
Then, it follows for solutions to the nonlocal Dirichlet problem \eqref{1.1.Dir} and \eqref{Cond.Dir} that $u^\epsilon \in H^1(\Omega^\epsilon)$ and satisfies  
$$
\frac{\partial u^\epsilon}{\partial x_i}(x) = \int_{\R^N}  J(x-y) \Big( \frac{\partial u^\epsilon}{\partial x_i}(y) - 
\frac{\partial u^\epsilon}{\partial x_i}(x) \Big) \, dy - \frac{\partial f}{\partial x_i}(x), 
\quad \textrm{ a.e. } \Omega^\epsilon.
$$ 
Consequently, since $\| \frac{\partial f}{\partial x_i} \|_{L^2(\Omega^\epsilon)}$ is uniformly bounded, from the same arguments used in Lemma \ref{LEBD}, we also get  
$$
\| u^\epsilon \|_{H^1(\Omega^\epsilon)} \leq K
$$ 
for some constant $K>0$ independent of $\epsilon$.
Hence, $P_\epsilon u^\epsilon$ is uniformly bounded in $H^1(\Omega)$, from where we can extract a subsequence (up to a sequence) such that  
\begin{equation} \label{u0}
P_\epsilon u^\epsilon \to u_0 \quad \textrm{ strongly in } L^2(\Omega),
\end{equation}
for some $u_0 \in H^1 (\Omega)$.

Remark that, for nonlocal problems with non-singular kernels it does not hold that the Dirichlet datum is taken continuously, that is, we do not have
$u^\epsilon(x) \to 0$ as $x\to \partial \Omega^\epsilon$ (even for solutions in $C(\Omega^\epsilon)$),
see \cite{Chasseigne}. Therefore, the extension $P_\epsilon u^\epsilon$
of $u^\epsilon$ to the holes does not coincide with the extension by zero, $\tilde u^\epsilon$,
that we have considered here.

Now, we observe that we can pass to the limit (weakly in $L^2$)  in the identity
$$\tilde u^\epsilon(x) = \chi_\epsilon(x) \,  P_\epsilon u^\epsilon(x), \quad \textrm{ a.e. } \Omega.$$
Note that $\tilde u^\epsilon$ is the extension by zero of $u^\epsilon$ in the holes and that 
$\chi_\epsilon \,  P_\epsilon u^\epsilon$ first extends $u^\epsilon$ to the holes and then
multiply by $\chi_\epsilon$.
From the limit of $\tilde u^\epsilon = \chi_\epsilon \,  P_\epsilon u^\epsilon$ we obtain the following relation between the function $u^* \in L^2(\Omega)$ given by Theorem \ref{theoD}, 
and $u_0 \in H^1 (\Omega)$ introduced in \eqref{u0}
$$
u^*(x) = \mathcal{X}(x) \, u_0(x), \qquad \textrm{ a.e. } \Omega.
$$

For the particular case of a periodically perforated domain we have that $ \mathcal{X}(x)$ is a constant
and therefore we obtain that $u^*(x) = \mathcal{X} \, u_0(x)  \in H^1 (\Omega)$. This regularity result
for the limit $u^*$ can also be obtained from the limit problem that is satisfied since we assumed that 
$f \in H^1(\Omega)$ and $J \in \mathcal{C}^1(\R^N,\R)$.
}
\end{remark}

\section{The Neumann problem.} \label{sect-Neumann}
\setcounter{equation}{0}

Given $f \in L^2(\Omega)$, we consider here the following nonlocal problem 
\begin{equation} \label{1.1}
f(x) = \int_{\R^N \setminus A^\epsilon} J (x-y) (u^\epsilon (y) - u^\epsilon (x)) dy, \qquad x  \in \Omega^\epsilon
\end{equation}
with 
\begin{equation} \label{Cond}
u^\epsilon (x) \equiv 0, \qquad x \in \R^N \setminus \Omega.
\end{equation}
Here we are calling $A^\epsilon$ the set
$A^\epsilon = \Omega \setminus \Omega^\epsilon$.

In this problem \eqref{1.1} with \eqref{Cond} we have taken nonlocal Neumann boundary conditions
in the holes and a Dirichlet boundary condition in the exterior of the set $\Omega$. The arguments that we use in this section are similar to the ones for the Dirichlet case. Hence, we concentrate on highlight the main differences.

Since we want to consider general functions $f\in L^2(\Omega)$,
in what follows we need that the first eigenvalue associated to this problem, that is given by
\begin{equation} \label{eigenN.66}
\lambda_1^\epsilon = \inf_{u\in W_\epsilon}  \frac{ \displaystyle
\frac12 \int_{\R^N \setminus A^\epsilon} 
\int_{\R^N\setminus A^\epsilon} J (x-y) (u (y) - u (x))^2 dy\, dx}{
\displaystyle \int_{\Omega^\epsilon} u^2(x) \, dx}
\end{equation}
with
$$
W_\epsilon = \left\{ u\in
L^2({\R^N \setminus A^\epsilon}) \ : \ u (x) \equiv 0, \; x \in \R^N \setminus \Omega \right\},
$$
is strictly positive.

In fact, when $\lambda_1^\epsilon=0$ then our problem 
\eqref{1.1} may not have solutions as the following example shows:
Assume that $\lambda_1^\epsilon=0$ and take $f$ such that 
$\int_{\R^N \setminus A^\epsilon} f \phi_1 \neq 0$ being $\phi_1$ an eigenfunction associated
with $\lambda_1^\epsilon$, that is, $\phi_1$ verifies,
$$0 = \int_{\R^N \setminus A^\epsilon} J(x-y) (\phi_1 (y) - \phi_1 (x)) dy.$$
The existence of such an eigenfunction can be proved as in \cite{jorge}. 
 Then, assume that there is a solution $u^\epsilon$ to the problem 
$$
 f(x) = \int_{\R^N \setminus A^\epsilon} J (x-y) (u^\epsilon (y) - u^\epsilon (x)) dy, \qquad x  \in \Omega^\epsilon.
 $$
 Multiplying by $\phi_1$ and integrating in $\R^N \setminus A^\epsilon$ we get
\begin{equation} \label{LNZ}
\begin{array}{l}
\displaystyle
0\neq \int_{\R^N \setminus A^\epsilon}  f(x) \phi_1(x) = \int_{\R^N \setminus A^\epsilon} \phi_1(x)  \int_{\R^N \setminus A^\epsilon} J (x-y) (u^\epsilon (y) - u^\epsilon (x)) dy\, dx \\[10pt]
\qquad \displaystyle 
= \int_{\R^N \setminus A^\epsilon} u^\epsilon(x)  \int_{\R^N \setminus A^\epsilon} J (x-y) (\phi_1 (y) - \phi_1 (x)) dy\, dx =0,
\end{array}
 \end{equation}
 a contradiction.

A condition under which the first eigenvalue $\lambda_1^\epsilon$ is strictly positive is given in our next result. 
Let us introduce this condition that we call ${\bf (H_N)}$.

 ${\bf (H_N)}$
We assume that there exists a finite family of sets  $B_0, \, B_1, \, ..., \, B_L \subset \R^N \setminus A^\epsilon$
such that $B_0= \R^N \setminus \Omega$,
$$
(\R^N \setminus A^\epsilon) \subset \bigcup_{i=0}^L B_i
\qquad \mbox{and} \qquad
\alpha_{j}=\frac{1}{4}\min_{x\in B_j} \int_{B_{j-1}} J(x-y)\,dy >0.
$$

This condition says that we can cover $\R^N \setminus A^\epsilon$ 
by a finite family of sets $B_i$ starting with $B_0=\R^N \setminus \Omega$
in such a way that every point in some $B_j$ sees the set $B_{j-1}$ with 
uniformly positive probability. This property holds if $A^\epsilon$ is not very thick
(see Example 1 below).

\begin{lemma} \label{lema.posi.neu}
Under the previous condition ${\rm {\bf (H_N)}}$ on the set $A^\epsilon$, 
there exists a positive constant $\lambda$ such that
$$
\int_{{\R^N \setminus A^\epsilon}}
\int_{{\R^N \setminus A^\epsilon}} J(x-y) |u(y)-u(x)|^2\,dy \,dx \geq \lambda \int_{\R^N \setminus A^\epsilon} |u(x)|^2\,dx ,
$$
for every  $u\in W_\epsilon = \left\{ u\in
L^2({\R^N \setminus A^\epsilon}) \ : \ u (x) \equiv 0, \; x \in \R^N \setminus \Omega \right\}$.
\end{lemma}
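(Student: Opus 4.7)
The plan is to exploit the chain $B_0,B_1,\dots,B_L$ given by ${\bf (H_N)}$ as a bootstrapping device: we know $u\equiv 0$ on $B_0=\R^N\setminus\Omega$, and condition ${\bf (H_N)}$ says every point of $B_j$ sees $B_{j-1}$ through the kernel $J$ with positive probability. I would propagate the vanishing of $u$ on $B_0$ step by step to obtain a Poincar\'e-type bound on each $B_j$, then sum over $j$ since $\{B_j\}_{j=1}^L$ covers $\Omega^\epsilon\subset\R^N\setminus A^\epsilon$.

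The key one-step estimate is the following. For $x\in B_j$ and $y\in B_{j-1}$, the inequality $|u(x)|^2\le 2|u(x)-u(y)|^2+2|u(y)|^2$ multiplied by $J(x-y)$ and integrated over $y\in B_{j-1}$ gives, using the lower bound $\int_{B_{j-1}}J(x-y)\,dy\ge 4\alpha_j$,
\[
4\alpha_j |u(x)|^2\le 2\int_{B_{j-1}}J(x-y)|u(x)-u(y)|^2\,dy+2\int_{B_{j-1}}J(x-y)|u(y)|^2\,dy.
\]
Integrating in $x$ over $B_j$ and using $\int_{B_j}J(x-y)\,dx\le \int_{\R^N}J=1$ in the last term, I would obtain
\[
\int_{B_j}|u|^2\,dx\le \frac{1}{2\alpha_j}\,E(u)+\frac{1}{2\alpha_j}\int_{B_{j-1}}|u|^2\,dy,
\]
where $E(u):=\int_{\R^N\setminus A^\epsilon}\int_{\R^N\setminus A^\epsilon}J(x-y)|u(y)-u(x)|^2\,dy\,dx$ is the bilinear energy, which dominates the double integrals over $B_j\times B_{j-1}$ since $B_j,B_{j-1}\subset \R^N\setminus A^\epsilon$.

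Starting the induction at $j=1$, the boundary term vanishes because $u\equiv 0$ on $B_0$, yielding $\int_{B_1}|u|^2\le (2\alpha_1)^{-1}E(u)$. Iterating, one gets $\int_{B_j}|u|^2\le K_j E(u)$ with constants $K_j$ defined recursively by $K_1=(2\alpha_1)^{-1}$ and $K_j=(2\alpha_j)^{-1}(1+K_{j-1})$, so in particular $K_j$ depends only on $\alpha_1,\dots,\alpha_j$. Since $u\equiv 0$ on $B_0$ and $\R^N\setminus A^\epsilon\subset \bigcup_{j=0}^L B_j$,
\[
\int_{\R^N\setminus A^\epsilon}|u|^2\,dx\le \sum_{j=1}^L\int_{B_j}|u|^2\,dx\le \Bigl(\sum_{j=1}^L K_j\Bigr) E(u),
\]
and the lemma follows with $\lambda=\bigl(\sum_{j=1}^L K_j\bigr)^{-1}$. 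The only slightly delicate point is controlling the cross term $\int_{B_j}\int_{B_{j-1}}J(x-y)|u(y)|^2\,dy\,dx$ without picking up a factor depending on the (possibly non-uniform) geometry of the $B_j$'s; using Fubini together with $\int_{\R^N}J=1$ avoids this and keeps $\lambda$ expressible purely in terms of the $\alpha_j$'s and the length $L$ of the chain, which is exactly what is needed for a uniform lower bound on $\lambda_1^\epsilon$ in the applications of later sections.
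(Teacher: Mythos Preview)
Your proof is correct and follows essentially the same chain-of-sets bootstrapping argument as the paper: both derive a one-step estimate $\int_{B_j}|u|^2 \le c_j\,E(u)+c_j\int_{B_{j-1}}|u|^2$ from the lower bound $\int_{B_{j-1}}J(x-y)\,dy\ge 4\alpha_j$ and the normalization $\int_{\R^N}J=1$, then iterate from $B_0$ where $u\equiv 0$ and sum. The only cosmetic difference is the pointwise inequality used (you take $|u(x)|^2\le 2|u(x)-u(y)|^2+2|u(y)|^2$, the paper uses $|u(x)-u(y)|^2\ge \tfrac14|u(x)|^2-|u(y)|^2$), which yields slightly different but structurally identical recursive constants.
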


\begin{proof} Following \cite[Chapter 6]{ElLibro} we cover the domain $\R^N \setminus A^\epsilon$ with a
finite family of disjoint sets, $B_j$  $j=0,1,\cdots, L$ (the existence of such sets is guaranteed by our
hypothesis) and
define
\begin{equation}\label{eq.alphaj}
\alpha_{j}=\frac{1}{4}\min_{x\in B_j} \int_{B_{j-1}} J(x-y)\,dy\,,\qquad
1=\int_{\mathbb R^N} J(s)\,ds\,.
\end{equation}
Now, for $u\in W_\epsilon$, we have
$$
\begin{array}{rl}
\displaystyle
\int_{\R^N \setminus A^\epsilon} \int_{\R^N \setminus A^\epsilon} & J(x-y) |u(y)-u(x)|^2\,dy\,dx\\
\ge & \displaystyle
\int_{B_j} \int_{B_{j-1}} J(x-y) |u(y)-u(x)|^2\,dy\,dx
\end{array}
$$
for $j=1,\cdots,L$, and
$$
\begin{array}{rl}
\displaystyle\int_{B_j} \int_{B_{j-1}} & J(x-y) |u(y)-u(x)|^2\,dy\,dx\\
\ge & \displaystyle \frac{1}{4} \int_{B_j} \left(\int_{B_{j-1}} J(x-y)\,dy\right) |u(x)|^2\,dx  -
\int_{B_{j-1}} \left(\int_{B_j} J(x-y)\,dx\right) |u(y)|^2\,dy\\
\ge & \displaystyle \alpha_j \int_{B_j}|u(x)|^2\,dx - \int_{B_{j-1}} |u(y)|^2\,dy\,.
\end{array}
$$
Then, taking $B_0= \R^N \setminus \Omega$ (notice that $u=0$ in $B_0$), we can iterate this inequality to get that
$$
\int_{B_j} |u(x)|^2\,dx\le C_j \int_{\R^N \setminus A^\epsilon} \int_{\R^N \setminus A^\epsilon} 
J(x-y) |u(y)-u(x)|^2\,dy\,dx\,
$$
where
$$
C_1=\frac{1}{\alpha_1}\,, \qquad
C_{j}=\frac{1}{\alpha_j} (1+ C_{j-1}) \quad j=2,\cdots,L\,.
$$
Therefore, adding in $j$, we have the Poincar\'e type inequality
\begin{equation}\label{eq.poincare}
\int_{\R^N \setminus A^\epsilon}  \int_{\R^N \setminus A^\epsilon}  J(x-y) |u(y)-u(x)|^2\,dy\,dx
\ge \lambda \int_{\R^N \setminus A^\epsilon} |u(x)|^2\,dx, 
\end{equation}
with
$$
\lambda=\left(\sum_{j=1}^L C_j\right)^{-1}\sim \prod_{j=1}^L \alpha_j\,,
$$
as we wanted to show.
\end{proof}

\begin{remark} \label{rem.31} {\rm
Note that if we have that for each $\epsilon$ condition ${\bf (H_N)}$ is satisfied with the number of sets, $L$, independent of $\epsilon$ and a uniform constant $\alpha$ such that $\alpha_j \geq \alpha>0$, then there is a positive constant $c$ independent of $\epsilon$ such that
$$
\lambda_1^\epsilon \geq c >0.
$$
In Section \ref{sect-perodic}, we will see that this property holds, for example, for periodically perforated domains 
in the case that the characteristic function $\mathcal{X} \in L^\infty(\R^N)$ given in \eqref{CharF} is not the null function.
}
\end{remark}

Now, let us observe that existence and uniqueness to our problem
follows considering the variational problem
$$
\min_{u \in W_\epsilon} \frac14 \int_{\R^N \setminus A^\epsilon} \int_{\R^N \setminus A^\epsilon} J (x-y) (u (y) - u (x))^2 dy\, dx -
\int_{\R^N \setminus A^\epsilon} f(x) u (x) \, dx.
$$
In fact, it follows from hypothesis $({\bf H_N})$ (we can use Lemma \ref{lema.posi.neu}) that 
$$
\| u \|^2 := \int_{\R^N \setminus A^\epsilon} \int_{\R^N \setminus A^\epsilon} J (x-y) (u (y) - u (x))^2 dy\, dx
$$
is a norm equivalent to the usual $L^2-$norm in $W_\epsilon$. 
Hence, the functional involved in the minimization problem is lower semicontinuous,
 coercive and convex in $W_\epsilon$, and then possesses a unique minimizer (that we call $u^\epsilon$). 
As for the Dirichlet case, this minimizer is a solution to  \eqref{1.1} and \eqref{Cond}.
Moreover, if $u^\epsilon \in L^2(\Omega^\epsilon)$ satisfies \eqref{1.1} and \eqref{Cond}, then it is 
a minimizer.
Multiplying the equation by $u^\epsilon$ we get
\begin{equation} \label{1.1.weak.22}
\begin{array}{l}
\displaystyle - \int_{\Omega^\epsilon}  f(x) u^\epsilon (x) \, dx  \displaystyle 
=  \frac12
  \int_{\R^N \setminus A^\epsilon}  \int_{\R^N \setminus A^\epsilon} J (x-y) (u^\epsilon (y) - u^\epsilon (x))^2\, dy dx 
  \geq \lambda_1^\epsilon  \int_{\Omega^\epsilon} (u^\epsilon(x))^2 \, dx.
  \end{array}
\end{equation}
Here $\lambda_1^\epsilon$ is the first eigenvalue \eqref{eigenN.66} associated with this operator in the space
$W_\epsilon$. 
Therefore, assuming that $\lambda_1^\epsilon \geq c>0$ with $c$ independent of $\epsilon$ (see Remark \ref{rem.31}), we get that
$$
\int_{\Omega^\epsilon} (u^\epsilon(x))^2 \, dx
$$
is bounded by a constant that depends only on $f$ but is independent of $\epsilon$. Hence, along a subsequence if necessary, 
$$
\tilde u^\epsilon \rightharpoonup u^*
$$
weakly in $L^2 (\Omega)$ as $\epsilon \to 0$ where $\tilde \cdot$ denotes the extension by zero on functions defined in the open set $\Omega^\epsilon$.
Hence, if $\chi_\epsilon$ is the characteristic function of $\Omega^\epsilon$ and $\tilde u^\epsilon$ is the extension by zero of  
$u^\epsilon$ to $\R^N$, we can write the weak form of the equation as
\begin{equation} \label{1.2}
\begin{array}{l}
 \displaystyle\int_{\Omega}  \chi_\epsilon(x) \, \varphi (x) f(x) \, dx  =  \int_{\Omega}  \chi_\epsilon(x) \, \varphi (x)  
 \left( \int_{\R^N} J(x-y) \, \tilde u^\epsilon (y) \, dy \right) dx \displaystyle 
\\[10pt]
  \qquad \qquad \displaystyle 
  - 
  \int_{\Omega}  \varphi (x) \, \tilde u^\epsilon(x)  \int_{\R^N \setminus A^\epsilon} J(x-y) \, dy \, dx
  \end{array}
\end{equation}
for any $\varphi \in L^2(\Omega)$.

%
%
%

Now let us give the proof of Theorem \ref{theo1}.

\begin{proof}[Proof of Theorem \ref{theo1}]
Under the assumption $\lambda_1^\epsilon \geq c>0$ with $c$ independent of $\epsilon$, 
we have already seen that 
$$
\| u^\epsilon \|_{L^2(\Omega^\epsilon)} \leq c^{-1} \| f \|_{L^2(\Omega)}
$$from where we obtain (up to a subsequence) the weak convergence 
$$
\tilde u^\epsilon \rightharpoonup u^*.
$$

First, we observe that the additional assumption $\mathcal{X}(x) = 0$ implies $u^*(x) = 0$ in $\R^N$.
In fact, it is a consequence of the lower semicontinuity of the norm and the limit  
$$
\left| \int_\Omega \chi_\epsilon(x) \, \varphi(x) \, u^\epsilon(x) \, dx \right| \leq \| \chi_\epsilon \varphi \|_{L^2(\Omega^\epsilon)} \| u^\epsilon \|_{L^2(\Omega^\epsilon)} 
\to 0, \qquad \mbox{as $\epsilon \to 0$ } \quad \forall \varphi \in L^2(\Omega),
$$

Now let us get the limit problem to the other cases. 
In order to do that, we need to pass to the limit in \eqref{1.2}.
Let us introduce the function
$$
O^\epsilon(x) = \int_{\R^N \setminus A^\epsilon} J(x-y) \, dy.
$$
We want to show that 
\begin{equation} \label{OL2C}
O^\epsilon \to O_0 \quad \textrm{ strongly in } L^2(\Omega)
\end{equation}
with
$$
O_0(x) = \int_{\R^N} J(x-y) \left( 1 - \chi_\Omega (y) + \mathcal{X}(y)  \right) dy.
$$

Since
\begin{eqnarray*}
O^\epsilon(x) & = & \int_{\R^N \setminus A^\epsilon} J(x-y) \, dy \\
& = & \int_{\R^N} J(x-y) \, dy - \int_{\Omega} J(x-y) \, dy + \int_{\Omega^\epsilon} J(x-y) \, dy
\end{eqnarray*}
for any $x \in \R^N$, we just need to pass to the limit in  
$$
\hat O^\epsilon(x) = \int_{\Omega^\epsilon} J(x-y) \, dy.
$$
Now, observe that 
$$
\hat O^\epsilon(x) = \int_{\Omega^\epsilon} J(x-y) \, dy = \int_{\Omega} \chi_\epsilon(y) \, J(x-y) \, dy,
$$
and then, from \eqref{CharF} we get 
$$
\hat O^\epsilon(x) \to \hat O_0(x) = \int_\Omega \mathcal{X}(y) \, J(x-y) \, dy 
$$
for all $x \in \R^N$.
Thus, since 
$$
|O^\epsilon(x)| \leq 1 \quad \forall \epsilon>0 \textrm{ and } x \in \R^N,
$$
we can argue as in \eqref{UL2C} to obtain from Dominated Convergence Theorem that
$$
\hat O^\epsilon \to \hat O_0 \quad \textrm{ strongly in } L^2(\Omega).
$$
Consequently we conclude \eqref{OL2C}.

Now we can combine \eqref{CharF}, \eqref{UL2C} and \eqref{OL2C} to pass to the limit in \eqref{1.2} obtaining 
$$
\begin{array}{l}
 \displaystyle\int_{\Omega}  \mathcal{X}(x) \, \varphi (x) f(x) \, dx  =  \int_{\Omega}  \mathcal{X}(x) \, \varphi (x)  
 \left( \int_{\R^N} J(x-y) \, u^*(y) \, dy \right) dx \displaystyle 
\\[10pt]
  \qquad \qquad \displaystyle 
  - 
  \int_{\Omega}  \varphi (x) \, u^*(x)  \int_{\R^N} J(x-y) \left( 1 - \chi_\Omega (y) + \mathcal{X}(y)  \right) dy \, dx
  \end{array}
$$
for any $\varphi \in L^2(\Omega)$. That can be rewriten as 
$$
\begin{array}{l}
 \displaystyle\int_{\Omega}  \mathcal{X}(x) \, \varphi (x) f(x) \, dx  =  \int_{\Omega}  \mathcal{X}(x) \, \varphi (x)  
  \int_{\R^N} J(x-y) \,( u^*(y) - u^*(x) ) \, dy \, dx \displaystyle 
\\[10pt]
  \qquad \qquad \displaystyle 
  +
  \int_{\Omega}  \varphi (x) \, u^*(x)  \left(  \mathcal{X}(x) - \int_{\R^N} J(x-y) \left( 1 - \chi_\Omega (y) + \mathcal{X}(y)  \right) dy  \right) dx
  \end{array}
$$
by the addition and subtraction of the term 
$$
\int_\Omega \varphi(x) \, \mathcal{X}(x) \, u^*(x) \, dx = \int_\Omega \varphi(x) \, \mathcal{X}(x) \int_{\R^N} J(x-y) \, u^*(x) \, dy \, dx
$$
since $\int_{\R^N} J(x-y) \, dy =1$.
Observe that $u^*(x)  \equiv 0$ wherever $x \in \R^N \setminus \Omega$.

Therefore, there exists $u^* \in L^2(\Omega)$ with $\tilde u^\epsilon \rightharpoonup u^*$ weakly in $L^2(\Omega)$ and $u^*$ satisfying the following nonlocal problem 
\begin{equation} \label{1.1.b}
\mathcal{X}(x) \, f(x) = \mathcal{X}(x) \int_{\R^N}  J (x-y) (u^*(y) - u^*(x) ) \, dy -\Lambda(x) \, u^*(x) 
\end{equation}
with 
$u^*(x) \equiv 0$, $x \in \R^N \setminus \Omega$,
where $\Lambda \in L^\infty(\Omega)$ is given by 
\begin{equation} \label{Lambda}
\Lambda(x) =  \int_{\R^N} J(x-y) \, ( 1- \chi_\Omega (y) + \mathcal{X}(y) ) \, dy - \mathcal{X}(x), \quad x \in \Omega,
\end{equation}
completing the proof of Theorem \ref{theo1}.

Uniqueness for the limit problem can be obtained as in the proof of Theorem \ref{theoD}.
This fact implies the convergence of whole sequence $\tilde u^\epsilon$.
\end{proof}

\begin{remark} {\rm
Observe that we still can reach the same result described in Theorem \ref{theo1} for the more general situation 
$$
f^\epsilon(x) = \int_{\R^N \setminus A^\epsilon} J(x-y) (u^\epsilon(y) - u^\epsilon(x)) \, dy
$$ 
with $f^\epsilon \to f$ strongly in $L^2(\Omega)$ since 
$\|f^\epsilon\|_{L^2(\Omega)}$ remains uniformly bounded in $\epsilon$ and 
$$
\int_\Omega \chi_\epsilon(x) \, f^\epsilon(x) \, dx \to \int_\Omega \mathcal{X}(x) \, f(x) \, dx
$$
as $\epsilon \to 0$.}
\end{remark}

We still notice that a stronger hypothesis is needed to obtain strong convergence in $L^2$ of the extended solutions to zero for the singular case $\mathcal{X}(x) = 0$ a.e. $\R^N$. 
In this respect, we have the following result.

\begin{cor}   \label{CNP} 
Let $\Gamma: \Omega \mapsto \R$ be the function given by
\begin{equation} \label{Gamma}
\Gamma(x) = \int_{\R^N \setminus \Omega} J(x-y) \, dy. 
\end{equation}
Suppose $\mathcal{X}(x)=0$ a.e. $\Omega$ in \eqref{CharF} and  assume
\begin{equation} \label{HGamma}
\Gamma(x) \geq m >0, \quad \textrm{ in } \Omega
\end{equation}
for some $m>0$. 

Then, the solutions $u^\epsilon$ of the Neumann problem \eqref{1.1} with \eqref{Cond} satisfies  
$$\| u^\epsilon \|_{L^2(\Omega^\epsilon)} \to 0, \quad \textrm{ as } \epsilon \to 0.$$
\end{cor}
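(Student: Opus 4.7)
The plan is to test the weak form of the Neumann equation against $\tilde u^\epsilon$ itself, much as in the proof of Corollary~\ref{corD}, and exploit the lower bound on $\Gamma$ to extract the desired strong convergence. By Theorem~\ref{theo1}, under the assumption $\mathcal{X}\equiv 0$ together with the hypothesis \eqref{HGamma}, we already know that $u^*=0$ a.e.\ and hence $\tilde u^\epsilon \rightharpoonup 0$ weakly in $L^2(\Omega)$; what remains is to upgrade this to strong convergence of the norm.

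The first step is to choose $\varphi=\tilde u^\epsilon$ in \eqref{1.2}. Since $\tilde u^\epsilon$ is supported in $\Omega^\epsilon$ we have $\chi_\epsilon\,\tilde u^\epsilon=\tilde u^\epsilon$, so the identity becomes
$$
\int_\Omega (\tilde u^\epsilon(x))^2\, O^\epsilon(x)\,dx
= \int_\Omega \tilde u^\epsilon(x)\, U_\epsilon(x)\,dx - \int_\Omega \tilde u^\epsilon(x)\, f(x)\,dx,
$$
where $U_\epsilon(x)=\int_{\R^N}J(x-y)\tilde u^\epsilon(y)\,dy$ and $O^\epsilon(x)=\int_{\R^N\setminus A^\epsilon}J(x-y)\,dy$. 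The second step is to bound the left-hand side from below. Since $\R^N\setminus\Omega\subset \R^N\setminus A^\epsilon$, we have $O^\epsilon(x)\geq \Gamma(x)\geq m>0$ for every $x\in\Omega$, so
$$
m\,\|\tilde u^\epsilon\|_{L^2(\Omega)}^2 \leq \int_\Omega \tilde u^\epsilon(x)\, U_\epsilon(x)\,dx - \int_\Omega \tilde u^\epsilon(x)\, f(x)\,dx.
$$

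The third step is to show that both terms on the right tend to zero. The term $\int_\Omega \tilde u^\epsilon f$ vanishes because $\tilde u^\epsilon\rightharpoonup 0$ weakly in $L^2(\Omega)$ and $f\in L^2(\Omega)$. For the other term, the same argument used to establish \eqref{UL2C} in the proof of Theorem~\ref{theoD} applies verbatim here: pointwise convergence $U_\epsilon(x)\to U_0(x)=\int_\Omega J(x-y)u^*(y)\,dy$ combined with the uniform bound $|U_\epsilon(x)|\leq |\Omega|^{1/2}\|J\|_\infty\|u^\epsilon\|_{L^2(\Omega^\epsilon)}$ yields, via dominated convergence, strong convergence $U_\epsilon\to U_0$ in $L^2(\Omega)$. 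Since $u^*\equiv 0$ forces $U_0\equiv 0$, we get $\|U_\epsilon\|_{L^2(\Omega)}\to 0$, and then Cauchy--Schwarz together with the uniform bound on $\|\tilde u^\epsilon\|_{L^2(\Omega)}$ gives $\int_\Omega \tilde u^\epsilon U_\epsilon\to 0$. Combining these observations with the inequality above yields $\|\tilde u^\epsilon\|_{L^2(\Omega)}\to 0$, which is exactly the claim.

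There is no real obstacle: the argument is essentially the energy identity strategy of Corollary~\ref{corD}, the only new ingredient being the observation that the ``dissipative'' coefficient $O^\epsilon$ inherits the lower bound $m$ from $\Gamma$ uniformly in $\epsilon$, which is precisely what is needed to compensate the lack of any direct coercivity in the $\mathcal{X}\equiv 0$ regime.
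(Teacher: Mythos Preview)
Your proof is correct and follows essentially the same route as the paper's own argument: both test the weak form \eqref{1.2} with $\tilde u^\epsilon$, use the weak convergence $\tilde u^\epsilon\rightharpoonup 0$ together with the strong convergence of $U_\epsilon$ to show that the right-hand side vanishes, and then invoke the uniform lower bound $O^\epsilon(x)\geq \Gamma(x)\geq m$ to squeeze the $L^2$-norm to zero. Your write-up is in fact slightly more explicit than the paper's (you spell out the role of $U_\epsilon\to 0$ strongly, which the paper leaves implicit in passing from \eqref{1.2Cor} to \eqref{SLM100}).
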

\begin{proof}
First, let us take $\varphi = \chi_\epsilon \, u^\epsilon$ in \eqref{1.2}. Then,  
\begin{equation} \label{1.2Cor}
 \displaystyle\int_{\Omega}  \tilde u^\epsilon (x) f(x) \, dx  =  \int_{\Omega}  \tilde u^\epsilon (x)  
 \left( \int_{\R^N} J(x-y) \, \tilde u^\epsilon (y) \, dy \right) dx  
  - 
  \int_{\Omega}  {\tilde u^\epsilon(x)}^2  \int_{\R^N \setminus A^\epsilon} J(x-y) \, dy \, dx.
\end{equation}

Since $\Gamma$ is strictly positive in $\Omega$, hypothesis (${\bf H_N}$) holds, and then 
$$
\tilde u^\epsilon \rightharpoonup 0, \quad \textrm{ weakly in } L^2(\Omega), 
$$
as $\epsilon \to 0$. 
Consequently, due to \eqref{1.2Cor}, we obtain 
\begin{equation} \label{SLM100}
\lim_{\epsilon \to 0}  \int_{\Omega}  {\tilde u^\epsilon(x)}^2  \int_{\R^N \setminus A^\epsilon} J(x-y) \, dy \, dx = 0.
\end{equation}
On the other hand, we have that 
\begin{equation} \label{SLM101}
\left| \int_{\Omega}  {\tilde u^\epsilon(x)}^2  \int_{\R^N \setminus A^\epsilon} J(x-y) \, dy \, dx \right| 
\geq \| u^\epsilon \|_{L^2(\Omega^\epsilon)} \inf_{x \in \bar \Omega} \int_{\R^N \setminus A^\epsilon} J(x-y) \, dy. 
\end{equation}

Hence, we conclude the proof from \eqref{HGamma}, \eqref{SLM100} and \eqref{SLM101}.
\end{proof}

Finally, we observe that in this section, we have not proceeded as in the proof of Lemma \ref{LEBD} to obtain a positive lower bound to the eigenvalues. 
In fact, we can not pass to the limit in the eigenvalue problem associated to the Neumann equation \eqref{1.1} with \eqref{Cond}
$$
- \lambda_1^\epsilon \int_{\Omega^\epsilon} \phi^\epsilon(x) \, \varphi(x) \, dx 
= \int_{\Omega^\epsilon} \varphi(x) \int_{\R^N \setminus A^\epsilon} J(x-y) ( \phi^\epsilon(y) - \phi^\epsilon(x) ) \, dy dx, \quad x \in \Omega^\epsilon,
$$
with $\phi^\epsilon$ and $\varphi \in L^2(\R^N)$ vanishing in $\R^N \setminus \Omega$.
We can extract a convergent subsequence of eigenvalues and eigenfunctions, but we are not able to evaluate their limits in order to show that they are non trivial.
This drives us to hypothesis $({\bf H_N})$ and the following example that shows that in fact there are
configurations of the holes for which the first eigenvalue is zero.

\begin{figure}[htp] 
\centering \scalebox{0.4}{\includegraphics{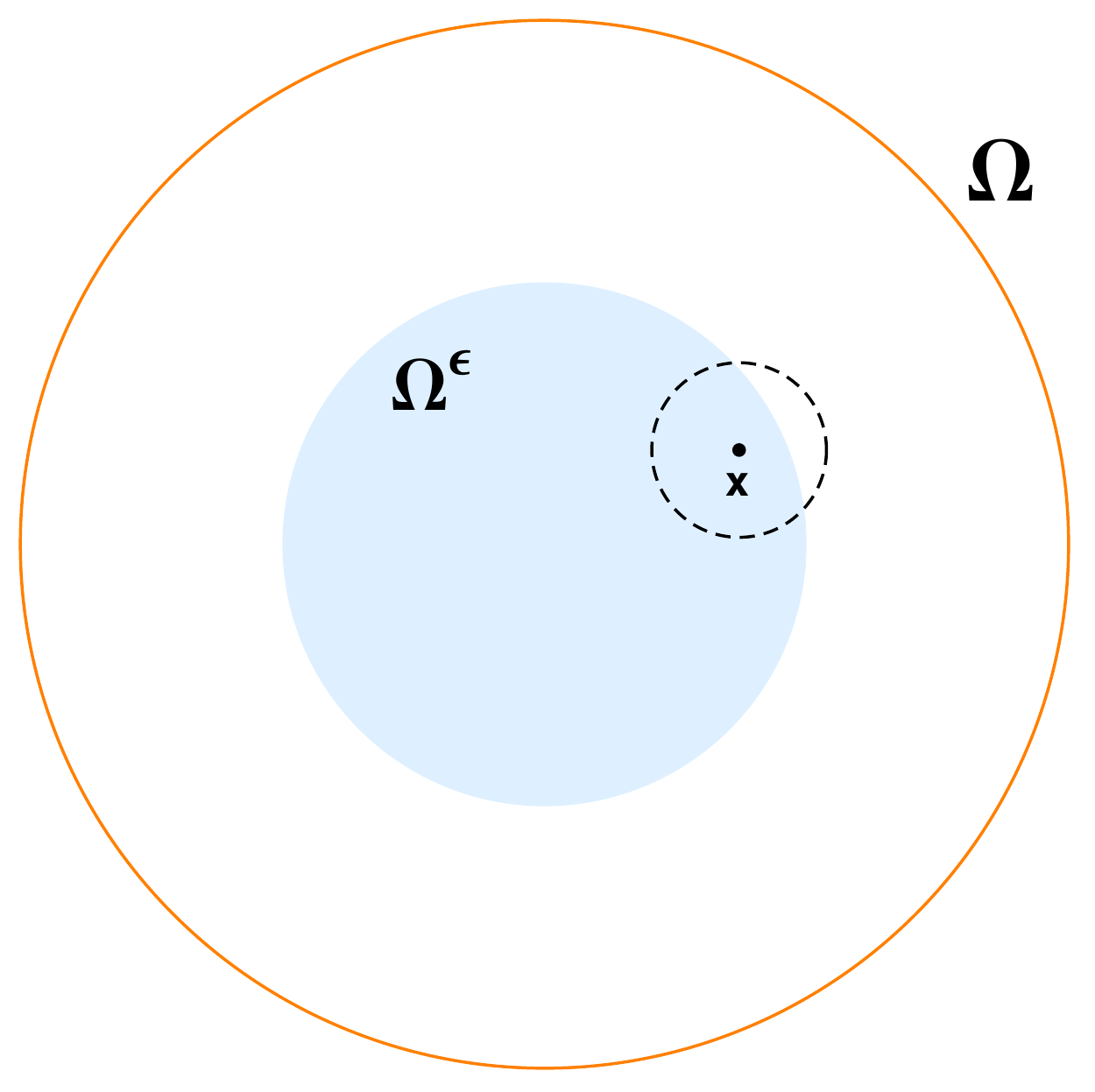}}
\caption{An example where the first eigenvalue to the Neumann problem is zero.}
\label{fig2} 
\end{figure}

\noindent{\bf Example 1.}
As illustrated in Figure \ref{fig2}, let $\Omega = B_{6}$ and $\Omega^\epsilon = B_{3}$ the balls of radius $6$ and $3$ centered at the origin in $\R^N$ respectively, 
hence the annulus $A^\epsilon = \Omega \setminus \Omega^\epsilon$ is the hole.
Now assume that the kernel $J$ satisfies hypothesis $({\bf H_J})$ and has the unit ball $B_1$ as its support.
Then, we have that the function 
$$
u(x) = \left\{
\begin{array}{ll}
1, & x \in \Omega^\epsilon \\
0, & x \in \R^N \setminus \Omega
\end{array}
\right.
$$
satisfies 
$$
\frac{ \displaystyle
\frac12 \int_{\R^N \setminus A^\epsilon} 
\int_{\R^N\setminus A^\epsilon} J (x-y) (u(y) - u (x))^2 dy\, dx}{
\displaystyle \int_{\Omega^\epsilon} u^2(x) \, dx} = 0
$$
since ${\rm supp} (J) = B_1$ implies $J(x-y) (u(y) - u (x)) = 0$ whenever $x$, $y \in \R^N \setminus A^\epsilon$.
Consequently, hypothesis $({\bf H_J})$ it is not enough to guarantee the first eigenvalue of the Neumann problem introduced in \eqref{eigenN.66} 
is strictly positive as in the Dirichlet case.

\section{Periodically Perforated Domains} \label{sect-perodic}
\setcounter{equation}{0}

In this section we discuss Theorems \ref{theoD} and \ref{theo1} in the particular situation where the holes in $\Omega \subset \R^N$ are periodically distributed. 

First we deal with the critical case, it means, that one in which the size and distribution of holes possess the same order.
Next we consider the other cases.

\subsection{Size and distribution with same order} \label{SSPPD1} 

Let $Q \subset \R^N$ be the \emph{representative cell} 
$$
Q = (0,l_1) \times (0,l_2) \times ... \times (0,l_N).
$$ 
We perforate $\Omega$ removing from it a set $A^\epsilon$ of periodically distributed holes given as follows:
Take any open set $A \subset Q$ such that $T=Q \setminus A$ is measurable set satisfying $|T| \neq 0$.
Denote by $\tau_\epsilon(A)$ the set of all translated images of $\epsilon \bar A$ of the form 
$\epsilon(kl + A)$ where $k \in \mathbb{Z}^N$ and $kl=(k_1 l_1, ..., k_N l_N)$. 
Now define
$$
A^\epsilon = \Omega \cap \tau_\epsilon(A).
$$

We introduce our perforated domain as 
\begin{equation} \label{PerfDom}
\Omega^\epsilon = \Omega \setminus A^\epsilon.
\end{equation}

Note that when considering $\Omega^\epsilon$ we have removed from $\Omega$ a large number of holes
of size $|\epsilon \bar A|$ which are $\epsilon$-periodically distributed. 
$A^\epsilon$ represent the sets of holes inside $\Omega$. 
It contains the interior holes, that ones that are fully contained into $\Omega$, as well as part of 
each hole that intersects the boundary $\partial \Omega$.

We now pass to the limit in the characteristic function $\chi_\epsilon$ to obtain the limit equations to Dirichlet and Neumann problems in the
family of perforated domains given by \eqref{PerfDom}.
To do that let $\chi_A$ be the characteristic function of the open set $A \subset Q$ extended periodically in $\R^N$. 
Hence, if $\chi_{A^\epsilon}$ is the characteristic function of $A^\epsilon$, for each $x \in A^\epsilon$
there exist $k \in \mathbb{Z}^N$ such that  
$$
\chi_{A^\epsilon}(x) = \chi_A\left(\frac{x-\epsilon k l}{\epsilon}\right) = \chi_A(x/\epsilon).
$$

Therefore, if $\chi_\Omega$ and $\chi_\epsilon$ are the characteristic functions of $\Omega$ and $\Omega^\epsilon$ respectively, 
we have the following relationship  
\begin{equation} \label{eqCF12}
\chi_\epsilon(x) = \chi_\Omega(x) - \chi_{A^\epsilon}(x). 
\end{equation}
It follows from the Average Theorem \cite[Theorem 2.6]{CD} that  
\begin{equation} \label{eqCF1}
\chi_{A^\epsilon} \rightharpoonup \frac{1}{|Q|} \int_{Q} \chi_A(s) \, ds = \frac{|A|}{|Q|}, \quad \textrm{ as } \epsilon \to 0, 
\end{equation}
weakly star in $L^\infty(\Omega)$. Hence, from \eqref{eqCF12}, \eqref{eqCF1} and 
$|T| = |Q\setminus A| = |Q| - |A|$ we obtain that 
$$
\chi_{\epsilon} \rightharpoonup \frac{|T|}{|Q|} \quad \textrm{ weakly$^*$ in } L^\infty(\Omega).
$$

In this way, we can set 
\begin{equation} \label{PDX}
\mathcal{X}(x) = \frac{|T|}{|Q|} \chi_\Omega(x) \quad \textrm{ in } \R^N
\end{equation}
at assumption \eqref{CharF}.

Thus, as a consequence of Theorem \ref{theoD} and \eqref{PDX}, the extended solutions $\tilde u^\epsilon$ of the Dirichlet problem \eqref{1.1.Dir} and \eqref{Cond.Dir} weakly converge in $L^2(\Omega)$ to the solution $u^*$ to 
\begin{equation} \label{VLPPDD}
\begin{array}{l}
 \displaystyle \frac{|T|}{|Q|} f(x) = \frac{|T|}{|Q|} \int_{\R^N} J(x-y) ( u^*(y) - u^*(x) ) \, dy  
- \left( \frac{|Q| - |T|}{|Q|} \right) \, u^*(x), \qquad x \in \Omega,
  \end{array}
\end{equation}
with
$$
u^* (x) \equiv 0, \qquad x \in \R^N \setminus \Omega,
$$
which can be rewritten as 
\begin{equation}  \label{PPDD}
\begin{array}{l}
 \displaystyle  f(x) =  \int_{\R^N} J(x-y) ( u^*(y) - u^*(x) ) \, dy 
 - \left( \frac{|Q| - |T|}{|T|} \right) \, u^*(x), \qquad x \in \Omega,
  \end{array}
\end{equation}
whenever $|T| \neq 0$.

Now to get the limit equation to the Neumann problem \eqref{1.1} and \eqref{Cond},
we first have to see that condition $({\bf H_N})$ is verified. 
To do that, given $\delta>0$ small we define the sets
$$
B_0 = \R^N \setminus \Omega
$$
and
$$
B_j =\{x\in \Omega^\epsilon \,:\,  \delta j <d(x,\partial \Omega)<\delta (j+1) \}\, \qquad j=1,..., L.
$$
Note that 
$$
(\R^N \setminus A^\epsilon) \subset \bigcup_{j=0}^L B_j
$$
Notice also that, for $x\in B_j$ we have 
\begin{equation} \label{HNV}
\begin{array}{l}
\displaystyle
\int_{B_{j-1}} J(x-y)\,dy \geq \int_{\{ y \in \Omega \setminus A^\epsilon \, : \, \delta (j-1) + \delta/2 <d(y,\partial \Omega) < \delta j \}}  J(x-y)\,dy \\[10pt]
\displaystyle
\qquad \geq \left(\min_{|z| \leq 1-\delta/2} J (z) \right) \times \Big| \{ y \in \Omega \setminus A^\epsilon \, : \, \delta (j-1) + \delta/2 <d(y,\partial \Omega) < \delta j \} \Big| >0.
\end{array}
\end{equation}
Here we have used that $J$ is continuous with $J(0) >0$.

Also observe that the number of sets, $L$, as well as the lower bound for the $\alpha_j$, depend only on $\delta$ 
and therefore they can be chosen independently of $\epsilon$.

Therefore, we can conclude from Lemma \ref{lema.posi.neu} that there exists $\epsilon_0$ and $c>0$ such that 
$$
\lambda^\epsilon_1 > c > 0, \qquad \forall \epsilon \in (0, \epsilon_0),
$$
and then, due to Theorem \ref{theo1}, we obtain that the limit equation of \eqref{1.1} and \eqref{Cond} is   
\begin{equation} \label{VLP1}
\begin{array}{l}
 \displaystyle  f(x) =  \int_{\R^N} J(x-y) ( u^*(y) - u^*(x) ) \, dy 
 \\[10pt]
  \qquad \qquad \qquad \qquad \displaystyle 
- \left( \frac{|Q| - |T|}{|T|} \right) \, u^*(x) \, \int_{\R^N \setminus \Omega} J(x-y) \, dy, \qquad x \in \Omega,
  \end{array}
\end{equation}
with $u^* (x) \equiv 0$ for all $x \in \R^N \setminus \Omega$, 
where the term $\Lambda$ defined in \eqref{Lambda} can be calculated by  
\begin{eqnarray*}
\Lambda(x) & = & \int_{\R^N} J(x-y) \left( 1 - \chi_\Omega(y) + \frac{|T|}{|Q|} \chi_\Omega(y) \right) dy - \frac{|T|}{|Q|} \chi_\Omega(x) \\
& = & \left(\frac{|Q|-|T|}{|Q|} \right) \int_{\R^N} J(x-y) \left( 1 - \chi_\Omega(y) \right) dy. 
\end{eqnarray*}

\begin{remark} 
{\rm 
Notice that what makes distinction between the limit problems \eqref{VLPPDD} and \eqref{VLP1} here is just the coefficient \eqref{Gamma}.
 }
\end{remark}

\begin{figure}[htp] 
\centering \scalebox{0.4}{\includegraphics{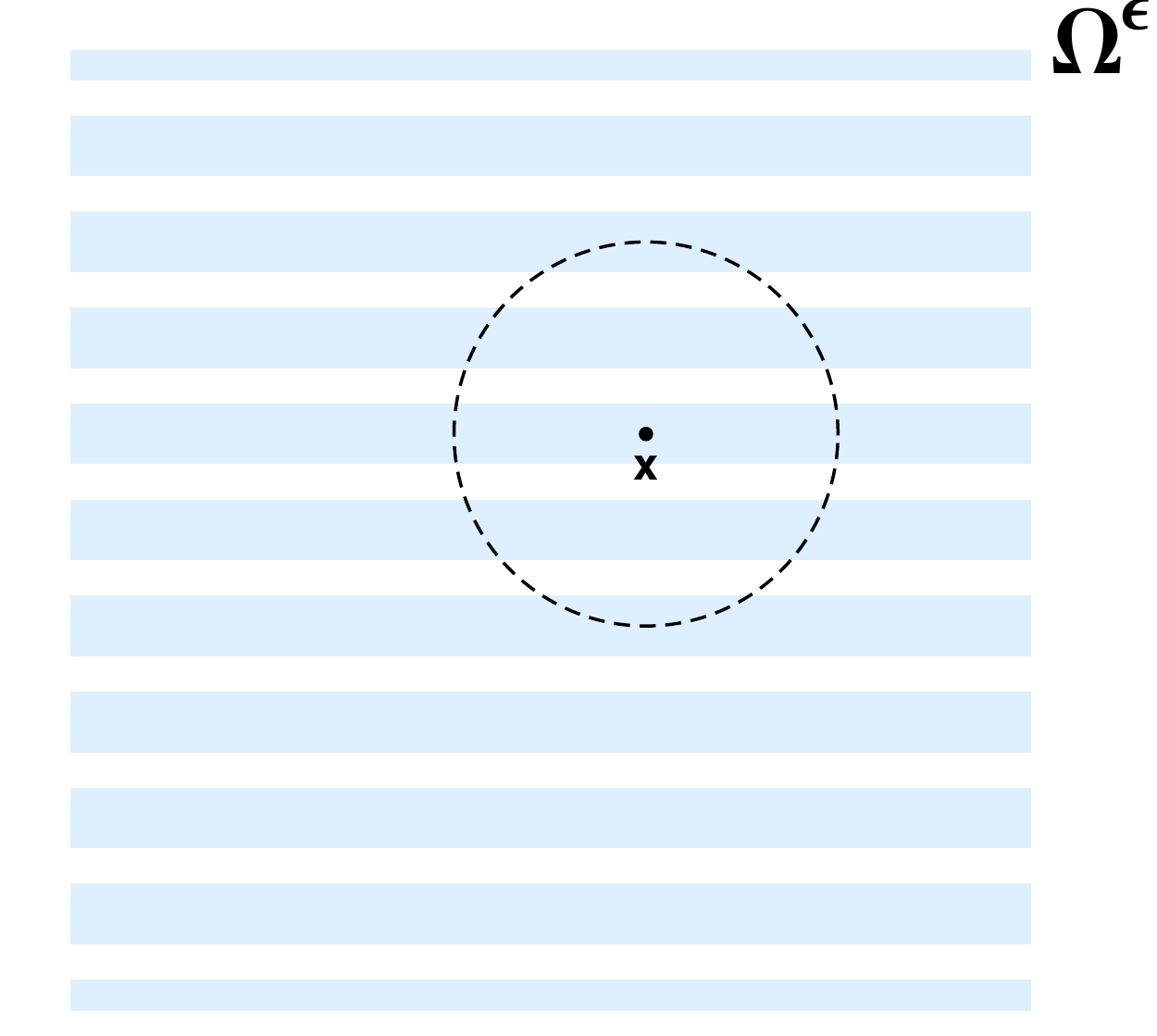}}
\caption{A non-connected perforated domain.}
\label{fig3} 
\end{figure}

Now, let us present an example that shows that we do not need the set $\Omega^\epsilon$
to be connected. This has to be contrasted with what happens in the local case (where connectedness
of $\Omega^\epsilon$ plays a crucial role).

\noindent{\bf Example 2.} We consider $\Omega = (-1,1) \times (-1,1)$ the periodic case in which the hole is given by
the scaled version of the strip
$$
A = [0,1] \times [1/3,2/3] ,
$$
in the representative cell $(0,1) \times (0,1)$. That is, we remove from $\Omega$
the set of all translated images of $\epsilon A$ of the form 
$\epsilon(k + A)$ where $k \in \mathbb{Z}^N$. 
Note that $\Omega^\epsilon$ is a square from where we have removed a large number of horizontal strips
of size $|\epsilon A|$ which are $\epsilon$-periodically distributed. 
See Figure \ref{fig3} which illustrates this situation.
Considering $B_1,....,B_L$ as horizontal strips of width $\delta$
one can easily check that hypothesis $({\bf H_N})$ holds (note that here $\delta$ and $L$ can be 
chosen independently of $\epsilon$).

\subsection{Size and distribution with different orders}

Now let us take holes in the open set $\Omega$ with distribution and size of different orders.
To do that we consider the sets $Q$, $A$ and $B$ as in Section \ref{SSPPD1}, 
and define by $\hat \tau_\epsilon(A)$ the union of all translated images of $\epsilon^\gamma \bar A$ of the form 
$$\epsilon kl + \epsilon^{\gamma}A = \epsilon(kl + \epsilon^{\gamma-1}A)$$ 
where $\gamma>0$, $k \in \mathbb{Z}^N$ and $kl=(k_1 l_1, ..., k_N l_N)$. 
Here the hole set are given by 
$$
A^\epsilon = \Omega \cap \hat \tau_\epsilon(A)
$$
and the perforated domain by 
$$
\Omega^\epsilon = \Omega \setminus A^\epsilon.
$$

When $\gamma<1$, the order of distributions are bigger that the size of the holes at $\epsilon=0$. 
On the other hand, the size order of the holes are larger than their distributions as $\gamma > 1$.
The case $\gamma=1$ has beed considered in Section \ref{SSPPD1} and the sets are of the same order of the size and distribution of a typical cell.

If we assume $\gamma>1$, the set of distributed holes vanishes in $\Omega$ as $\epsilon \to 0$, and then, $\Omega^\epsilon$ fills the whole of $\Omega$ which implies  
\begin{equation} \label{CharFOmega}
\chi_{\epsilon} \rightharpoonup \chi_\Omega \quad \textrm{ weakly$^*$ in } L^\infty(\Omega)
\end{equation}
where $\chi_\Omega$ is the characteristic function of $\Omega$.

Then, as a consequence of Theorems \ref{theoD} and \ref{theo1}, 
we get the same limit equation to both Dirichlet and Neumann problems \eqref{1.1.Dir} and \eqref{1.1} respectively.
We get the nonlocal Dirichlet problem in the open set $\Omega$
\begin{equation} 
\begin{array}{l}
 \displaystyle  f(x) = \int_{\R^N} J(x-y) ( u^*(y) - u^*(x) ) \, dy, \quad x \in \Omega, 
 \end{array}
\end{equation}
with
$u^* (x) \equiv 0$, in $\R^N \setminus \Omega$.

Notice that hypothesis (${\bf H_N}$) to the Neumann problem can be easy verified here as in \eqref{HNV}. 
Among other things, it means that under the assumptions \eqref{CharFOmega}, (${\bf H_J}$) and (${\bf H_N}$), the nonlocal Neumann problem \eqref{1.1} behaves as the non local Dirichlet problem for $\epsilon$ small enough.

Finally, if $\gamma <1$, it is not difficult to see that the set of the holes fill the whole of $\Omega$ when $\epsilon$ goes to zero implying that 
$$
\chi_{\epsilon} \rightharpoonup 0 \quad \textrm{ weakly$^*$ in } L^\infty(\Omega).
$$ 
Hence, it follows from Theorems \ref{theoD} and \ref{theo1} that the extended solutions $\tilde u^\epsilon$ 
to both Neumann and Dirichlet problems weakly converge to zero in $L^2(\Omega)$ as $\epsilon \to 0$. 
Moreover, from Corollary \ref{corD}, we still obtain strong convergence to zero in the Dirichlet case. 
That is, if $u^\epsilon$ satisfies the Dirichlet problem \eqref{1.1.Dir} and \eqref{Cond.Dir}, then 
$$
\tilde u^\epsilon \to 0 \quad \textrm{ strongly in } L^2(\Omega)
$$ 
as $\epsilon \to 0$.

With the additional condition \eqref{HGamma}, we also have $\tilde u^\epsilon \to 0$ strongly in $L^2(\Omega)$
for the Neumann problem \eqref{1.1} with \eqref{Cond}. 
See Corollary \ref{CNP}.

\section{Rescaling the kernel} \label{sect-rescales}
\setcounter{equation}{0}

In this section our aim is to see how these problems behave when we rescale the kernel in order
to approximate local equations. Hence, now we have two different parameters $\epsilon$ (that controls
the size of the holes) and $\delta$ (that is used to rescale the kernel). Our aim is to study the limits $\epsilon \to 0$
(to have an homogenized limit problem) and $\delta \to 0$ (to approach local problems).

To describe accurately these limits along this section we need to restrict ourselves to the periodic case. We assume that
we have a bounded domain $\Omega$
from where we have removed a big number of periodic small balls (the holes). 
That is, we consider $\Omega^\epsilon = \Omega \setminus \cup B_{r^\epsilon} (x_i)$
where $B_{r^\epsilon} (x_i)$ is a ball centered 
in $x_i\in \Omega$ of the form $x_i \in 2 \epsilon \Z^N$ with $0 < r^\epsilon < \epsilon \leq 1$. 
To simplify, we only remove here balls that are strictly contained in $\Omega$.
Moreover, we will assume here that $J$ is radially symmetric.

Our aim is to show that, in general, the involved limits do not commute, but it holds that both limits
exist. For $u^{\epsilon, \delta}$ a solution to the nonlocal problem in $\Omega^\epsilon$ with
a kernel rescaled with $\delta$ we have that
$$
\lim_{\delta \to 0} \lim_{\epsilon \to 0} \tilde{u}^{\epsilon, \delta} = w,
$$
and
$$
 \lim_{\epsilon \to 0} \lim_{\delta \to 0} \tilde{u}^{\epsilon, \delta} = v,
$$
but, in general 
$$
w\neq v.
$$

\subsection{The Dirichlet case}

We start by considering
\begin{equation} \label{Dir-ep-delsec}
f(x) =\frac{C}{\delta^{N+2}} \int_{\R^N} J \left(\frac{x-y}{\delta}\right) 
(u^{\epsilon,\delta} (y) - u^{\epsilon,\delta} (x)) dy, \qquad x \in \Omega^\epsilon,
\end{equation}
with 
\begin{equation} \label{Cond.Dir.sec}
u^{\epsilon,\delta} (x) \equiv 0, \qquad x \in \R^N \setminus \Omega^\epsilon ,
\end{equation}
and $C$ is a normalizing constant given by 
\begin{equation} \label{constC.Dir}
C = \left( \frac{1}{2} \int_{\R^N} J(x) \, x_{1}^2 \, dx \right)^{-1}
\end{equation}
where $x_{1}$ is the first coordinate of $x \in \R^N$.

Existence and uniqueness of the solutions $u^{\epsilon,\delta}$ of \eqref{Dir-ep-delsec} are guaranteed in Section \ref{sect-Dirichlet} for any $f \in L^2(\Omega)$, $\epsilon$ and $\delta>0$.
Thus, we proceed with the analysis of the behavior of $u^{\epsilon,\delta}$ as $\epsilon$ and $\delta$ go to zero.  

Performing the change of variable $$z=\frac{(x-y)}{\delta} $$ and using Taylor expansion, we obtain 
for a smooth $u$
\begin{equation} \label{NPRN.jj}
\begin{array}{l}
\displaystyle  C \frac{1}{\delta^{2+N}} \int_{\R^N} J\left(\frac{x-y}{\delta}\right) \left( u(y) - u(x) \right) dy =  \frac{C}{\delta^2} \int_{\R^N} J(z) \left( u(x-\delta z) - u(x) \right) dz  \\[10pt]
\qquad \displaystyle =  \frac{C}{2} \left[ \sum_{i=1}^{N} \partial_i^2 u(x)  \right] \int_{\R^N} J(z) z^2_{1} dz + O(\delta)  =  \Delta_{x} u(x) + O(\delta) .
\end{array}
\end{equation}

Expression \eqref{NPRN.jj} makes a connection between the non-local problem with the kernel $J_{\delta}$ 
and the following boundary value problem   
\begin{equation} \label{LNP-jj}
\left\{
\begin{array}{ll}
\displaystyle \Delta v^\epsilon(x)  = f(x), \qquad & x \in \Omega^\epsilon \\[6pt]
v^\epsilon (x) = 0, 
\qquad & x \in \partial \Omega^\epsilon.
\end{array}
\right. 
\end{equation}

Now,   
we can use the results in \cite[Section 3.2.2]{ElLibro} (see also \cite{CER1}) to obtain 
that
\begin{equation} \label{LD}
\| u^{\delta,\epsilon} - v^\epsilon \|_{L^2(\Omega)} \to 0, \quad \textrm{ as } \delta \to 0.
\end{equation}

As we can see from \cite{CM}, problem \eqref{LNP-jj} is the prototype for the study of the Laplacian in perforated domains.  As we have mentioned in the introduction, the limit as $\epsilon \to 0$ is given by 
\eqref{v^*.intro}.
Hence, we have that 
$$
\| v^\epsilon - v \|_{L^2(\Omega)} \to 0, \quad \textrm{ as } \epsilon \to 0.
$$

Now, we just observe that we have obtained the following result.

\begin{theorem} \label{teo.rescalesDir.1}
If $u^{\delta,\epsilon} $, $v$ are given by \eqref{Dir-ep-delsec} and \eqref{v^*.intro} respectively, it holds that 
$$
\lim_{\epsilon \to 0} \left( \lim_{\delta \to 0} u^{\delta,\epsilon} \right) = v \quad \textrm{ in } L^2(\Omega),
$$
\end{theorem}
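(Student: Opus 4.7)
The plan is to exploit the separation of the two limits: first localize the nonlocal operator by sending $\delta\to 0$ with the perforated domain $\Omega^\epsilon$ held fixed, and then let $\epsilon\to 0$ using the classical homogenization result of Cioranescu--Murat. Since the theorem asserts a genuinely iterated limit, no uniformity in $\epsilon$ is required in the first step, and this is what makes the argument clean.

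First I would fix $\epsilon>0$ and study $u^{\delta,\epsilon}$ as $\delta\to 0$. The Taylor expansion computation \eqref{NPRN.jj} together with the normalization \eqref{constC.Dir} is exactly the consistency statement underlying the approximation of the Laplacian by the rescaled nonlocal operator with kernel $J_\delta$. Invoking the convergence result from \cite[Section 3.2.2]{ElLibro} (see also \cite{CER1}), which applies in the fixed bounded domain $\Omega^\epsilon$ with zero Dirichlet data on $\R^N\setminus\Omega^\epsilon$, one obtains
$$
\|u^{\delta,\epsilon}-v^\epsilon\|_{L^2(\Omega)}\to 0\quad\text{as }\delta\to 0,
$$
where $v^\epsilon$ is the solution of the local Dirichlet problem \eqref{LNP-jj} in $\Omega^\epsilon$, extended by zero outside $\Omega^\epsilon$. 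This identifies $\lim_{\delta\to 0}u^{\delta,\epsilon}=v^\epsilon$ in $L^2(\Omega)$.

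Next I would send $\epsilon\to 0$ in $v^\epsilon$. Since $\Omega^\epsilon$ is the periodic perforated domain described at the beginning of the section (balls of radius $r^\epsilon$ centred at the nodes of $2\epsilon\Z^N$ strictly inside $\Omega$), the behaviour of $v^\epsilon$ is governed by the classical Cioranescu--Murat theorem \cite{CM}, which gives precisely the trichotomy described in \eqref{v^*.intro} according to whether $r^\epsilon\ll a^\epsilon$, $r^\epsilon=a^\epsilon$, or $r^\epsilon\gg a^\epsilon$, with $a^\epsilon\sim \epsilon^{N/(N-2)}$. In all three cases the extended functions $v^\epsilon$ converge strongly in $L^2(\Omega)$ to $v$ (strong convergence in $L^2$ follows from Rellich, since one has weak $H^1$ convergence in the first two regimes and strong $H^1$ convergence in the third). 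Combining the two limits yields $\lim_{\epsilon\to 0}\lim_{\delta\to 0}u^{\delta,\epsilon}=v$ in $L^2(\Omega)$.

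The main technical obstacle is not in either limit taken separately, since both are quoted from the literature, but rather in making sure the two convergences are compatible in $L^2(\Omega)$: the first step only gives convergence of $u^{\delta,\epsilon}$ to $v^\epsilon$ without any explicit rate in $\delta$ that is uniform in $\epsilon$, which is exactly why the iterated limit, and not the diagonal limit, is what the theorem claims. In particular, no effort is needed to control the two parameters simultaneously, and the argument reduces essentially to a concatenation of the two cited results via the triangle inequality
$$
\|u^{\delta,\epsilon}-v\|_{L^2(\Omega)}\le \|u^{\delta,\epsilon}-v^\epsilon\|_{L^2(\Omega)}+\|v^\epsilon-v\|_{L^2(\Omega)},
$$
where for each fixed $\epsilon$ the first term vanishes as $\delta\to 0$, and then the second term vanishes as $\epsilon\to 0$.
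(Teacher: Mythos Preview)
Your proposal is correct and follows essentially the same route as the paper: first invoke \cite[Section 3.2.2]{ElLibro} (and \cite{CER1}) to obtain $\|u^{\delta,\epsilon}-v^\epsilon\|_{L^2(\Omega)}\to 0$ as $\delta\to 0$ for fixed $\epsilon$, then apply the Cioranescu--Murat result \cite{CM} to pass to the limit $\epsilon\to 0$ in $v^\epsilon$. Your additional remarks on the Rellich argument for strong $L^2$ convergence and on why no uniformity in $\epsilon$ is needed are helpful elaborations, but the core argument is identical to the paper's.
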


Now, we let first $\epsilon \to 0$ and then $\delta \to 0$.
To compute the first limit we rewrite our problem \eqref{Dir-ep-delsec} as follows: 
$$
\frac{\delta^2}{C} f(x) =\frac{1}{\delta^{N}} \int_{\R^N} J \left(\frac{x-y}{\delta}\right) 
(u^{\epsilon,\delta} (y) - u^{\epsilon,\delta} (x)) dy, \qquad x \in \Omega^\epsilon.
$$
Note that the involved kernel satisfies
$$
\frac{1}{\delta^{N}} \int_{\R^N} J \left(\frac{z}{\delta}\right) \, dz =1.
$$
This property was used in the proof of Theorem \ref{theoD} in Section \ref{sect-Dirichlet}.
Arguing as in Section \ref{sect-Dirichlet} with a fixed $\delta$ 
we obtain that there exists $u^{*,\delta} \in L^2(\Omega)$ such that 
$$\tilde u^{\epsilon, \delta} \rightharpoonup u^{*,\delta}  \textrm{ weakly in } L^2(\Omega).$$
Moreover, the limit $u^{*,\delta}$ satisfies the following nonlocal problem in $\Omega$
\begin{equation} \label{eq.jjj}
\begin{array}{l}
\displaystyle
\frac{\delta^2}{C}\mathcal{X}(x) \, f(x) = \mathcal{X}(x) \, \frac{1}{\delta^{N}} \int_{\R^N}  J \left( \frac{x-y}{\delta} \right) (u^{*,\delta} (y) - u^{*,\delta} (x) ) \, dy - (1-\mathcal{X}(x)) \, u^{*,\delta} (x) 
\end{array}
\end{equation}
with $u^{*,\delta} (x) \equiv 0$, for $x \in \R^N \setminus \Omega$.
Note that, since we are considering periodic holes, we have that $\mathcal{X}$ is a constant 
given by
\begin{equation} \label{ffff}
\mathcal{X}=
\left\{\begin{array}{ll}
1  \qquad & \mbox{ if }  r^\epsilon \ll C_0 \epsilon, \\[5pt]
\displaystyle \frac{|Q\setminus B|}{|Q|}  \qquad & \mbox{ if }  r^\epsilon = C_0 \epsilon.
\end{array} \right.
\end{equation}
Recall that $|Q\setminus B|$ is the measure of the complement of the ball $B$ in the cube $Q$. Hence, here
$\frac{|Q \setminus B|}{|Q|} $ is the proportion of the cube that is inside $\Omega^\epsilon$.
Also recall that in this case the critical size is 
$b^\epsilon := C_0 \epsilon$.

Now, for the limit as $\delta \to 0$ we consider two cases. First, when $r^\epsilon \ll b^\epsilon$,
we have $\mathcal{X} =1$ and \eqref{eq.jjj} reduces to
$$
f(x) =  
 \frac{C}{\delta^{N+2}} \int_{\R^N}  J \left( \frac{x-y}{\delta} \right) (u^{*,\delta} (y) - u^{*,\delta} (x) ) \, dy ,
$$
and from the results in \cite[Section 3.2.2]{ElLibro} we get that 
$$
u^{*,\delta} \to w, \qquad \mbox{ in }L^2 (\Omega),
$$ 
as $\delta \to 0$,
where $w$ is the solution to
$\Delta w = f $ with $w=0$ on $\partial \Omega$.

In the case $r^\epsilon = b^\epsilon$ we have 
$$
\frac{\delta^2}{C}  f(x) =  \frac{1}{\delta^{N}} \int_{\R^N}  J \left( \frac{x-y}{\delta} \right) (u^{*,\delta} (y) - u^{*,\delta} (x) ) \, dy - \frac{(1-\mathcal{X})}{\mathcal{X} } \, u^{*,\delta} (x) .
$$
Multiplying by $u^{*,\delta}$ and integrating we get
$$
\begin{array}{l}
\displaystyle
\int_{\R^N}  \frac{\delta^2}{C}  f(x) u^{*,\delta} (x)  \, dx= -\frac12 \frac{1}{\delta^{N}} \int_{\R^N} \int_{\R^N}  J \left( \frac{x-y}{\delta} \right) (u^{*,\delta} (y) - u^{*,\delta} (x) )^2 \, dy \, dx 
\\[10pt]
\qquad \qquad \qquad \qquad \qquad \qquad \qquad \displaystyle
- \frac{(1-\mathcal{X})}{\mathcal{X} }  \int_{\R^N}  (u^{*,\delta} (x) )^2 \, dx.
\end{array}
$$
It follows that
$$
 \frac{(1-\mathcal{X})}{\mathcal{X} }   \int_{\R^N}  (u^{*,\delta} (x) )^2 \, dx
\leq \left| \int_{\R^N}  \frac{\delta^2}{C}  f(x) u^{*,\delta} (x)  \, dx \right| \leq C(f) \delta^2
\left(  \int_{\R^N}  (u^{*,\delta} (x) )^2 \, dx\right)^{1/2},
$$
and we conclude that
$$
u^{*,\delta} \to 0, \qquad \mbox{ in } L^2 (\Omega),
$$
as $\delta \to 0$.

Hence, we have obtained that  
$$
\| u^{*,\delta} - w \|_{L^2(\Omega)} \to 0, \quad \textrm{ as } \delta \to 0,
$$
where $w$ is given by
\begin{equation}
\label{chichi}
w = \left\{
\begin{array}{ll}
\mbox{the solution to } \Delta w = f \mbox{ with } w=0 \mbox{ on }\partial \Omega
, \qquad & \, \mbox{if } r^\epsilon \ll b^\epsilon, \\[6pt] 
0, \qquad & \, \mbox{if } r^\epsilon = b^\epsilon.
\end{array}
\right.
\end{equation}

We have obtained the following theorem.
\begin{theorem} \label{teo.rescalesDir.1.67}
If $u^{\delta,\epsilon} $, $w$ are given by \eqref{Dir-ep-delsec} and \eqref{chichi} respectively,it holds that 
$$
\lim_{\delta \to 0} \left( \lim_{\epsilon \to 0} \tilde{u}^{\delta,\epsilon} \right) = w \quad \textrm{ weakly in } L^2(\Omega),
$$
\end{theorem}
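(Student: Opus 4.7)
The plan is to peel off the two limits one at a time, starting with the inner limit $\epsilon\to 0$. For each fixed $\delta>0$ the rescaled kernel $J_\delta(z)=\delta^{-N}J(z/\delta)$ still satisfies $({\bf H_J})$: it is continuous, nonnegative, compactly supported, symmetric, positive at the origin and normalized to integrate to $1$. So I would rewrite \eqref{Dir-ep-delsec} as
$$
\frac{\delta^{2}}{C}f(x)=\int_{\R^{N}}J_\delta(x-y)\bigl(u^{\epsilon,\delta}(y)-u^{\epsilon,\delta}(x)\bigr)\,dy,\qquad x\in\Omega^\epsilon,
$$
and apply Theorem \ref{theoD} verbatim, with $f$ replaced by $(\delta^{2}/C)f$ and $J$ by $J_\delta$. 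This yields $\tilde u^{\epsilon,\delta}\rightharpoonup u^{*,\delta}$ weakly in $L^{2}(\Omega)$ as $\epsilon\to 0$, where $u^{*,\delta}$ solves the limit problem \eqref{eq.jjj} with the constant $\mathcal X$ given by \eqref{ffff}. This reduces the theorem to analyzing $\delta\to 0$ in \eqref{eq.jjj}.

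I would then split according to the two regimes in \eqref{ffff}. In the subcritical case $r^\epsilon\ll b^\epsilon$ one has $\mathcal X=1$, so the zero-order term $-(1-\mathcal X)u^{*,\delta}$ in \eqref{eq.jjj} vanishes and $u^{*,\delta}$ solves exactly the rescaled nonlocal Dirichlet problem
$$
f(x)=\frac{C}{\delta^{N+2}}\int_{\R^{N}}J\!\left(\frac{x-y}{\delta}\right)\bigl(u^{*,\delta}(y)-u^{*,\delta}(x)\bigr)\,dy,\qquad x\in\Omega,
$$
with $u^{*,\delta}\equiv 0$ outside $\Omega$. Combining the Taylor expansion \eqref{NPRN.jj} with the known rescaling result in \cite[Section~3.2.2]{ElLibro} (essentially \eqref{LD} but with $\Omega$ in place of $\Omega^\epsilon$) then gives $u^{*,\delta}\to w$ strongly, hence weakly, in $L^{2}(\Omega)$, where $w$ solves $\Delta w=f$ in $\Omega$ with $w=0$ on $\partial\Omega$, matching \eqref{chichi}.

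The harder part is the critical regime $r^\epsilon=b^\epsilon$, where $\mathcal X=|Q\setminus B|/|Q|\in(0,1)$ and the friction coefficient $(1-\mathcal X)/\mathcal X>0$ is genuinely present. The key step is an energy identity: pair \eqref{eq.jjj} with $u^{*,\delta}$ itself and use the symmetry of $J_\delta$ to rewrite the nonlocal term as the nonpositive quadratic form
$$
-\frac{1}{2\delta^{N}}\int_{\R^{N}}\!\int_{\R^{N}}J\!\left(\frac{x-y}{\delta}\right)\bigl(u^{*,\delta}(y)-u^{*,\delta}(x)\bigr)^{2}\,dy\,dx\le 0.
$$
Discarding this contribution and estimating the $f$-term by Cauchy--Schwarz leaves
$$
\frac{1-\mathcal X}{\mathcal X}\,\|u^{*,\delta}\|_{L^{2}(\Omega)}^{2}\le \frac{\delta^{2}}{C}\,\|f\|_{L^{2}(\Omega)}\,\|u^{*,\delta}\|_{L^{2}(\Omega)},
$$
which forces $\|u^{*,\delta}\|_{L^{2}(\Omega)}=O(\delta^{2})$ and so $u^{*,\delta}\to 0$ strongly in $L^{2}(\Omega)$, agreeing with $w=0$ in \eqref{chichi}.

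I expect the main obstacle to be organizational rather than analytical: one has to recognize that the inner limit is a direct application of Theorem \ref{theoD} to the rescaled kernel (the extra factor $\delta^{2}/C$ on the data is irrelevant for a fixed $\delta$), and then notice that in the critical regime it is precisely the zero-order friction term surviving from the homogenization that annihilates the solution as $\delta\to 0$. Once this structural split is made, the subcritical case reduces to quoting the nonlocal-to-local rescaling theorem and the critical case to a one-line energy estimate; no compactness, spectral, or two-scale convergence machinery is needed, which is why only weak convergence (and not strong) is obtained for this iterated limit.
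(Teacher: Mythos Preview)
Your proposal is correct and follows essentially the same route as the paper: rewrite \eqref{Dir-ep-delsec} so that the rescaled kernel $J_\delta$ integrates to $1$, apply Theorem~\ref{theoD} at fixed $\delta$ to obtain the intermediate limit $u^{*,\delta}$ solving \eqref{eq.jjj}, then split into the two regimes of \eqref{ffff}, quoting \cite[Section~3.2.2]{ElLibro} when $\mathcal X=1$ and using the energy identity (multiply by $u^{*,\delta}$, drop the nonpositive quadratic form, apply Cauchy--Schwarz) when $\mathcal X<1$. One small remark: your closing sentence slightly misattributes the weakness of the iterated limit---both outer limits you obtain are in fact strong in $L^2$; it is the inner limit $\epsilon\to 0$ from Theorem~\ref{theoD} that is only weak, which is why the theorem is stated with weak convergence.
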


Now, let us see when $v$ and $w$ coincide.
We have to distinguish several cases according to the size of the holes. Notice $b^\epsilon \geq a^\epsilon$ whenever $\epsilon$ is small enough.

{\bf Case 1. $r^\epsilon = b^\epsilon$.} In this case we have that $w =0$ and $v=0$. Therefore 
$w= v$ in this case. 

{\bf Case 2. $a^\epsilon \ll r^\epsilon \ll b^\epsilon$.}
In this case, $w$ is the solution to
$\Delta w = f $ and $v=0$. Therefore $w\neq v$ in this case. 

{\bf Case 3. $r^\epsilon = a^\epsilon$.}
In this case, $w$ is the solution to
$\Delta w = f $ and $v$ the solution to $\Delta v - \mu \, v = f $. Therefore, also $w\neq v$ in this case. 

{\bf Case 4. $r^\epsilon \ll a^\epsilon$.}
In this case, $w$ and $v$ coincide and are given by the unique solution to
$\Delta w = f $.

\subsection{The Neumann case}

Now we consider
\begin{equation} \label{Neu-ep-delsec}
f(x) =\frac{C}{\delta^{N+2}} \int_{\R^N \setminus A^\epsilon} J \left(\frac{x-y}{\delta}\right) 
(u^{\epsilon,\delta} (y) - u^{\epsilon,\delta} (x)) dy, \qquad x \in \Omega^\epsilon,
\end{equation}
with 
\begin{equation} \label{Cond.Dir.sec.88}
u^{\epsilon,\delta} (x) \equiv 0, \qquad x \in \R^N \setminus \Omega,
\end{equation}
and $C$ is the same normalizing constant that we used for the Dirichlet case and is given by 
\eqref{constC.Dir}.

Existence and uniqueness of the solutions $u^{\epsilon,\delta}$ of \eqref{Neu-ep-delsec} are guaranteed in Section \ref{sect-Neumann} for any $f \in L^2(\Omega)$, $\epsilon$ and $\delta>0$, provided  there is a positive constant $c$ independent of $\epsilon$ such that
$$
\lambda_1^\epsilon \geq c >0.
$$
We have seen that this holds in our case, that is, for periodically perforated domains.

Let us proceed with the analysis of the behavior of $u^{\epsilon,\delta}$ as $\epsilon$ and $\delta$ go to zero.  
From \eqref{NPRN.jj} using results from \cite{CERW} we have
$$
\| u^{\delta,\epsilon} - v^\epsilon \|_{L^2(\Omega^\epsilon)} \to 0, \quad \textrm{ as } \delta \to 0.
$$
being $v^\epsilon$ the solution to the local problem
\begin{equation} \label{LNP-mm}
\left\{
\begin{array}{ll}
\displaystyle \Delta v^\epsilon(x)  = f(x), \qquad & x \in \Omega^\epsilon \\[5pt]
v^\epsilon (x) = 0, 
\qquad & x \in \partial \Omega, 
\\[5pt]
\displaystyle
\frac{\partial  v^\epsilon(x)}{\partial \eta} = 0, \qquad & x \in \partial \Omega^\epsilon \cap \Omega.
\end{array}
\right. 
\end{equation}

Now, using results from \cite[Theorem 2.16]{CioS} (see also \cite{CioS2}) we get that  the limit as $\epsilon \to 0$ 
in this local problem is given by 
\begin{equation}
\label{chi-Neu}
v = \left\{
\begin{array}{ll}
\mbox{the solution to } \Delta v = f , \qquad & \mbox{if } r^\epsilon \ll b^\epsilon, \\[5pt] 
\displaystyle \mbox{the solution to } \sum_{i,j=1}^N  q_{ij} \frac{\partial^2 v}{\partial x_i \partial x_i} = \frac{|Q \setminus B|}{|Q|}f , \qquad & \mbox{if } r^\epsilon = b^\epsilon,
\end{array}
\right.
\end{equation}
with Dirichlet boundary conditions, $v=0$ on $\partial \Omega$.
Here $q_{ij}$ are called the \emph{homogenized coefficients} and are given by
$$
q_{ij} = \frac{1}{|Q|} \left[ \int_{Q \setminus B} \delta_{ij} \, dy - \int_{Q \setminus B} \frac{\partial X^i}{\partial y_j}(y) \, dy \right],
$$
where $\delta_{ij}$ is the Kronecker delta and $X^i$ (for $i=1,...,N$) are the solutions of the system
$$
\left\{ 
\begin{array}{ll}
\Delta X^i = 0 & \textrm{ in } Q \setminus B, \\
\partial_\eta X^i = \eta_i & \textrm{ on } \partial B, \\
X^i & Q-\textrm{periodic}
\end{array}
\right.
$$
with $\int_{Q \setminus B} X^i(y) \, dy = 0$.
Here the critical size of the holes is 
$
b^\epsilon := C_0 \epsilon$,

Now let $P_\epsilon$ be an extension operator, that is, $P_\epsilon \in \mathcal{L}(L^2(\Omega^\epsilon); L^2(\Omega)) \cap \mathcal{L}(V_\epsilon; H^1_0(\Omega))$ 
where $V_\epsilon = \{ v \in H^1(\Omega) \, : \, u = 0 \textrm{ on } \partial_{ext} \Omega^\epsilon \}$
(the existence of such extension operator was proved in \cite{CM}). 
Then, from \cite[Theorem 2.16]{CioS} we have 
$$
\| P_\epsilon v^\epsilon - v \|_{L^2(\Omega)} \to 0, \quad \textrm{ as } \epsilon \to 0.
$$

Thus, we conclude that

\begin{theorem} \label{teo.rescalesDir.1.77}
If $u^{\delta,\epsilon} $, $v$ are given by \eqref{Neu-ep-delsec} and \eqref{chi-Neu} respectively, it holds that 
$$
\lim_{\epsilon \to 0} P_\epsilon \left( \lim_{\delta \to 0} u^{\delta,\epsilon} \right) = v \quad \textrm{ in } L^2(\Omega).
$$
\end{theorem}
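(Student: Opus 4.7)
The plan is to execute the two limits in the order indicated by the statement. For each fixed $\epsilon>0$, I would first analyze the inner limit $\delta \to 0$ of the nonlocal Neumann problem on $\Omega^\epsilon$, and then reduce the outer limit $\epsilon \to 0$ to a classical local homogenization problem on a periodically perforated domain.

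For the inner step, one fixes $\epsilon>0$ and views \eqref{Neu-ep-delsec}--\eqref{Cond.Dir.sec.88} as a rescaled nonlocal problem on the fixed set $\Omega^\epsilon$. The Taylor expansion \eqref{NPRN.jj} formally identifies the rescaled operator $\frac{C}{\delta^{N+2}} \int_{\R^N \setminus A^\epsilon} J((x-y)/\delta)(u(y)-u(x))\,dy$ with the Laplacian $\Delta u$ in $\Omega^\epsilon$. Restricting the $y$-integration to $\R^N \setminus A^\epsilon$ encodes a homogeneous Neumann condition on $\partial A^\epsilon \cap \Omega$, while the exterior vanishing condition $u^{\epsilon,\delta} \equiv 0$ on $\R^N \setminus \Omega$ becomes a Dirichlet condition on $\partial \Omega$. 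Using the local-limit results for nonlocal equations with mixed boundary conditions in \cite{CERW}, one concludes
$$
\| u^{\epsilon,\delta} - v^\epsilon \|_{L^2(\Omega^\epsilon)} \to 0 \qquad \textrm{ as } \delta \to 0,
$$
where $v^\epsilon$ is the unique solution to the mixed problem \eqref{LNP-mm}.

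For the outer step, I would apply the extension operator $P_\epsilon$ constructed in \cite{CM}, which extends $v^\epsilon \in V_\epsilon$ to an element of $H^1_0(\Omega)$ with norm bounded independently of $\epsilon$. Then the classical periodic homogenization theorem \cite[Theorem 2.16]{CioS}, combined with the uniform $H^1_0(\Omega)$ bound on $P_\epsilon v^\epsilon$, yields
$$
P_\epsilon v^\epsilon \to v \quad \textrm{ strongly in } L^2(\Omega),
$$
where $v$ is given by \eqref{chi-Neu}: the standard Poisson solution in the subcritical regime $r^\epsilon \ll b^\epsilon$, and the solution of the constant-coefficient system with homogenized coefficients $q_{ij}$ in the critical regime $r^\epsilon = b^\epsilon$. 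Composing the two convergences produces the claimed iterated limit.

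The main obstacle I anticipate is the inner convergence $u^{\epsilon,\delta} \to v^\epsilon$ in $L^2(\Omega^\epsilon)$ with enough control as $\delta \to 0$, since the geometry of the holes enters here and one needs the nonlocal-to-local passage to be compatible with the Neumann condition on the boundary $\partial A^\epsilon \cap \Omega$. Fortunately, for periodically distributed small balls strictly contained in $\Omega$ these boundaries are smooth, so one can invoke \cite{CERW} directly without any further regularization. Once the inner limit $v^\epsilon$ is identified, the outer limit is a standard application of the theory of \cite{CM, CioS}, and the extension operator $P_\epsilon$ is precisely the tool that makes both steps fit together in the single ambient space $L^2(\Omega)$.
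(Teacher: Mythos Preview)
Your proposal is correct and follows essentially the same approach as the paper: first pass to the local mixed Dirichlet/Neumann problem \eqref{LNP-mm} via \cite{CERW} as $\delta\to 0$, then apply the extension operator of \cite{CM} together with \cite[Theorem 2.16]{CioS} to obtain the homogenized limit \eqref{chi-Neu} as $\epsilon\to 0$. The paper's argument is exactly this two-step reduction, with the same references and in the same order.
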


Now, we reverse the order in which we take limits and let first $\epsilon \to 0$ and then $\delta \to 0$.
First, as we did in the Dirichlet case we write our equation as 
$$
\frac{\delta^2}{C}f(x) =\frac{1}{\delta^{N}} \int_{\R^N \setminus A^\epsilon} J \left(\frac{x-y}{\delta}\right) 
(u^{\epsilon,\delta} (y) - u^{\epsilon,\delta} (x)) dy, \qquad x \in \Omega^\epsilon,
$$
since again in this case we want to use that the involved kernel satisfies
$$
\frac{1}{\delta^{N}} \int_{\R^N} J \left(\frac{z}{\delta}\right) \, dz =1.
$$

Arguing as in Section \ref{sect-Neumann} with a fixed $\delta$ 
we obtain that there exists a limit $u^{*,\delta} \in L^2(\Omega)$ such that 
$$\tilde u^{\epsilon, \delta} \rightharpoonup u^{*,\delta}  \qquad \textrm{ weakly in } L^2(\Omega).$$
Moreover, the limit $u^{*,\delta}$ satisfies the following nonlocal problem in $\Omega$
\begin{equation} \label{eq.mmm}
\left\{
\begin{array}{l}
\displaystyle
\frac{\delta^2}{C} \mathcal{X}(x) \, f(x) = \mathcal{X}(x) \, \frac{1}{\delta^{N}} \int_{\R^N}  J \left( \frac{x-y}{\delta} \right) (u^*(y) - u^*(x) ) \, dy -\Lambda(x) \, u^*(x) 
\\[10pt]
\displaystyle 
u^*(x) \equiv 0, \qquad x \in \R^N \setminus \Omega,
\end{array}
\right.
\end{equation}
where $\Lambda \in L^\infty(\Omega)$ is given by 
$$
\Lambda(x) =  \frac{1}{\delta^{N}} \int_{\R^N}  J \left( \frac{x-y}{\delta} \right) \, ( 1- \chi_\Omega(y) + \mathcal{X}(y) ) \, dy - \mathcal{X}(x), \quad x \in \Omega.
$$
Here $\chi_\Omega$ is the characteristic function of the open set $\Omega$ and $\mathcal{X}$ is the constant given by 
\eqref{ffff}, that is,
$$
\mathcal{X}=
\left\{\begin{array}{ll}
\displaystyle
\frac{|Q \setminus B|}{|Q|}  \qquad & \mbox{ if }  r^\epsilon = C_0 \epsilon, \\[10pt]
1  \qquad & \mbox{ if }  r^\epsilon \ll C_0 \epsilon.
\end{array} \right.
$$
Note that also in this case the critical size is 
$
b^\epsilon := C_0 \epsilon$.

Now, for the limit as $\delta \to 0$ we consider two cases. First, when $r^\epsilon \ll b^\epsilon$,
we have $\mathcal{X} =1$ and from the results in \cite[Section 3.2.2]{ElLibro} we get that 
$$
u^{*,\delta} \to w, \qquad \mbox{ in }L^2 (\Omega),
$$ 
as $\delta \to 0$,
where $w$ is the solution to
$\Delta w = f $ with $w=0$ on $\partial \Omega$.

In the case $r^\epsilon = b^\epsilon$ we have 
$$
\frac{\delta^2}{C}  f(x) =  \frac{1}{\delta^{N}} \int_{\R^N}  J \left( \frac{x-y}{\delta} \right) (u^{*,\delta} (y) - u^{*,\delta} (x) ) \, dy - \Lambda \, u^{*,\delta} (x) .
$$
Multiplying by $u^{*,\delta}$ and integrating we get, arguing as in the Dirichlet case and using Lemma \ref{lema.posi.neu}, that
$$
u^{*,\delta} \to 0, \qquad \mbox{ in } L^2 (\Omega),
$$
as $\delta \to 0$.

Hence, we have obtained that $u^{*,\delta}$ converges as $\delta \to 0$ to $w$ that is given by
\begin{equation}
\label{chichi-Neu}
w = \left\{
\begin{array}{ll}
\mbox{the solution to } \Delta w = f , \mbox{ with $w=0$ on $\partial \Omega$},\qquad & \mbox{if } r^\epsilon \ll b^\epsilon, \\[6pt] 
 0 , \qquad & \mbox{if } r^\epsilon = b^\epsilon.
\end{array}
\right.
\end{equation}

Therefore, we have the following theorem.
\begin{theorem} \label{teo.rescalesNeu.1}
If $u^{\delta,\epsilon} $, $w$ are given by \eqref{Neu-ep-delsec} and \eqref{chichi-Neu} respectively,it holds that 
$$
\lim_{\delta \to 0} \left( \lim_{\epsilon \to 0} \tilde{u}^{\delta,\epsilon} \right) = w \quad \textrm{ weakly in } L^2(\Omega).
$$
\end{theorem}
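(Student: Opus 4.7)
The plan is to carry out the two limits in succession, combining Theorem \ref{theo1} (to take $\epsilon\to 0$ with $\delta$ fixed) with the known nonlocal-to-local convergence from \cite[Section 3.2.2]{ElLibro} (to take $\delta\to 0$ with the limit $u^{*,\delta}$).

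First I would rewrite the problem \eqref{Neu-ep-delsec} by multiplying through by $\delta^2/C$ so that the kernel $J_\delta(z):=\delta^{-N}J(z/\delta)$ satisfies $\int_{\R^N} J_\delta=1$ and verifies hypothesis $({\bf H_J})$. With $\delta>0$ held fixed, this places us exactly in the setting of Section \ref{sect-Neumann} with datum $(\delta^2/C)f$, kernel $J_\delta$, and periodic perforations. The verification of $({\bf H_N})$ for $J_\delta$ on the periodically perforated domain proceeds as in \eqref{HNV}, using that $J_\delta$ is continuous and strictly positive on a ball of radius $\delta$ around the origin; this yields a uniform positive lower bound on $\lambda_1^\epsilon$ (depending on $\delta$, but that is fine since we are fixing $\delta$ in the inner limit). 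Theorem \ref{theo1} then gives $\tilde u^{\epsilon,\delta}\rightharpoonup u^{*,\delta}$ weakly in $L^2(\Omega)$ as $\epsilon\to 0$, with $u^{*,\delta}$ satisfying \eqref{eq.mmm}, where $\mathcal{X}$ is the constant \eqref{ffff}.

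Next I would take $\delta\to 0$ in \eqref{eq.mmm}, splitting according to the size of the holes. In the subcritical case $r^\epsilon\ll b^\epsilon$ we have $\mathcal{X}=1$, so $\Lambda(x)=\int_{\R^N} J_\delta(x-y)(1-\chi_\Omega(y))\,dy=\int_{\R^N\setminus\Omega} J_\delta(x-y)\,dy$, which vanishes for every $x\in\Omega$ as soon as $\delta$ is smaller than $\operatorname{dist}(x,\partial\Omega)/\operatorname{diam}(\operatorname{supp} J)$, because $J$ is compactly supported. Equation \eqref{eq.mmm} then reduces to the rescaled nonlocal equation with kernel $J_\delta$, and the standard nonlocal-to-local convergence result of \cite[Section 3.2.2]{ElLibro} yields $u^{*,\delta}\to w$ in $L^2(\Omega)$, where $w$ solves $\Delta w=f$ with $w=0$ on $\partial\Omega$.

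In the critical case $r^\epsilon=b^\epsilon$ we have $\mathcal{X}=|Q\setminus B|/|Q|\in(0,1)$, and the plan is to prove $u^{*,\delta}\to 0$ via an energy estimate. Testing \eqref{eq.mmm} against $u^{*,\delta}$ and symmetrizing the double integral gives
\begin{equation*}
\frac{1}{2\delta^{N}}\int_{\R^N}\int_{\R^N} J\!\left(\tfrac{x-y}{\delta}\right)(u^{*,\delta}(y)-u^{*,\delta}(x))^2\,dy\,dx + \int_{\Omega}\tfrac{\Lambda(x)}{\mathcal{X}}\,(u^{*,\delta}(x))^2\,dx = -\tfrac{\delta^2}{C}\int_\Omega f\,u^{*,\delta}\,dx.
\end{equation*}
Both terms on the left are nonnegative: the first obviously so, and for the second one checks that $\Lambda(x)\ge \int_{\R^N\setminus\Omega} J_\delta(x-y)\,dy\ge 0$ from the rewriting given after Theorem \ref{theo1} (the symmetric integral $\int J_\delta(x-y)(\mathcal{X}(y)-\mathcal{X}(x))\,dy$ is nonnegative in the periodic setting since $\mathcal{X}$ is constant on $\Omega$). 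Cauchy--Schwarz on the right-hand side then yields $\|u^{*,\delta}\|_{L^2(\Omega)}\le C(f)\,\delta^2$ once we use a Poincaré-type inequality à la Lemma \ref{lema.posi.neu} or, more directly, the fact that on the bulk of $\Omega$ the coefficient $\Lambda/\mathcal{X}$ stabilizes to the positive constant $(1-\mathcal{X})/\mathcal{X}$ as $\delta\to 0$. This forces $u^{*,\delta}\to 0$ in $L^2(\Omega)$, matching the critical-case value of $w$ in \eqref{chichi-Neu}.

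The main obstacle is the critical case: one must verify carefully that the effective absorption coefficient $\Lambda/\mathcal{X}$ in \eqref{eq.mmm} retains a uniform positive lower bound on a set of positive measure independent of $\delta$ (or, equivalently, that the contribution $\int J_\delta(x-y)(1-\chi_\Omega(y))\,dy$ does not harm the estimate in the interior), so that the absorption term dominates and extracts the $O(\delta^2)$ rate. Apart from this, everything else is a direct combination of Theorem \ref{theo1} with the standard rescaling limit from \cite{ElLibro}.
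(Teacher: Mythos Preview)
Your plan matches the paper's proof almost step for step: rewrite \eqref{Neu-ep-delsec} so the rescaled kernel has unit mass, apply Theorem \ref{theo1} for the inner limit $\epsilon\to 0$, and then split the $\delta\to 0$ limit into the subcritical case (handled by \cite[Section 3.2.2]{ElLibro}) and the critical case (energy estimate by testing against $u^{*,\delta}$, then invoking Lemma \ref{lema.posi.neu}). The paper's own treatment of the critical case is equally terse---it writes only ``arguing as in the Dirichlet case and using Lemma \ref{lema.posi.neu}''---so your flagging of that step as the main obstacle is appropriate.

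One correction, however: your ``more direct'' alternative in the critical case is wrong. Since in the periodic setting $\mathcal{X}(y)=c\,\chi_\Omega(y)$ with $c=|Q\setminus B|/|Q|$ a constant, a direct computation from the definition of $\Lambda$ in \eqref{eq.mmm} gives, for $x\in\Omega$,
\[
\Lambda(x)=(1-c)\int_{\R^N\setminus\Omega}\frac{1}{\delta^N}J\!\left(\frac{x-y}{\delta}\right)dy,
\]
which vanishes at every $x$ with $\mathrm{dist}(x,\partial\Omega)>\delta\,\mathrm{diam}(\mathrm{supp}\,J)$. So $\Lambda/\mathcal{X}$ does \emph{not} stabilize to the positive constant $(1-\mathcal{X})/\mathcal{X}$ in the bulk; it tends to zero there. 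This is exactly what distinguishes the Neumann critical case from the Dirichlet one (where the absorption coefficient really is the constant $(1-\mathcal{X})/\mathcal{X}$, and the estimate is immediate), and it is why the appeal to Lemma \ref{lema.posi.neu}---to leverage the nonlocal energy term together with the boundary absorption---is essential rather than optional. Drop the ``more direct'' route and keep the Poincar\'e-type argument; with that adjustment your plan coincides with the paper's.

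As a minor side remark, in the subcritical case your pointwise-in-$\delta$ discussion of $\Lambda$ is unnecessary: when $c=1$ the formula above gives $\Lambda\equiv 0$ for every $\delta$, so \eqref{eq.mmm} reduces exactly to the rescaled Dirichlet problem on $\Omega$ and \cite[Section 3.2.2]{ElLibro} applies with no further comment.
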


Now, let us see when $v$ and $w$ coincide.
We have to distinguish only two cases according to the size of the holes:

{\bf Case 1. $r^\epsilon = b^\epsilon$.} In this case we have that $w=0$ 
and $v$ is the solution to 
$$\sum_{i,j=1}^N  q_{ij} \frac{\partial^2 v}{\partial x_i \partial x_i} = \frac{|Q \setminus B|}{|Q|}f .$$
Note that $w\neq v$ in this case. 

{\bf Case 2. $r^\epsilon \ll b^\epsilon$.}
In this case, $w$ and $v$ coincide and are given by the unique solution to
$\Delta w = f $.

{\bf Acknowledgements.} 
The first author is partially supported by CNPq 302960/2014-7 and 471210/2013-7, FAPESP 2015/17702-3 (Brazil) and the second author by MINCYT grant MTM2016-68210 (Spain).

\end{document}